\newcommand{\Res}{\mathrm{Res}}
\newcommand{\Ind}{\mathrm{Ind}}
\newcommand{\cind}{\textrm{c-}\mathrm{Ind}}
\newcommand{\SL}{\mathrm{SL}}
\newcommand{\GL}{\mathrm{GL}}
\newcommand{\Sp}{\mathrm{Sp}}
\newcommand{\spn}{\mathrm{span}}
\newcommand{\inn}{\mathrm{inn}}
\newcommand{\ratF}{F}
\newcommand{\unextF}{L}
\newcommand{\extF}{E}
\newcommand{\D}{D}
\newcommand{\DD}{\mathbb{D}}
\newcommand{\GG}{\mathsf{G}} %for D vs D^1
\newcommand{\TT}{\mathsf{T}}
\newcommand{\OF}{\mathcal{O}_{\ratF}}
\newcommand{\OFt}{\mathcal{O}^\times_{\ratF}}
\newcommand{\OD}{{\mathcal{O}_{\D}}}
\newcommand{\ODt}{{\mathcal{O}^\times_{\D}}}
\newcommand{\OL}{\mathcal{O}_{\unextF}}
\newcommand{\eqo}{\equiv_{\ODt}}
\newcommand{\phiE}{\phi_{\extF}}
\newcommand{\PF}{\mathcal{P}_{\ratF}}
\newcommand{\PD}{\mathcal{P}_{\D}}
\newcommand{\Pext}{\mathcal{P}_{\extF}}
\newcommand{\psiE}{\psi_{\extF}}
\newcommand{\T}{\mathbb{T}}
\newcommand{\Stor}{\mathbb{S}}
\newcommand{\resF}{\mathfrak{f}}
\newcommand{\LieT}{\mathfrak{t}}
\newcommand{\LieZ}{\mathfrak{z}}
\newcommand{\LieS}{\mathfrak{s}}
\newcommand{\LieG}{\mathfrak{g}}
\newcommand{\G}{\mathbb{G}}
\newcommand{\ep}{\varepsilon}
\newcommand{\rhop}{\widetilde{\rho}}
\newcommand{\p}{\varpi}
\newcommand{\val}{\mathrm{val}}
\newcommand{\valD}{\mathrm{val}_{\D}}
\newcommand{\valextF}{\mathrm{val}_{\extF}}
\newcommand{\real}{\mathbb{R}}
\newcommand{\nrd}{\mathrm{nrd}}
\newcommand{\apart}{\mathcal{A}}
\newcommand{\buil}{\mathcal{B}}
\newcommand{\tr}{\mathrm{Tr}}
\newcommand{\Hom}{\mathrm{Hom}}
\newcommand{\smat}[1]{\left[ \begin{smallmatrix} #1 \end{smallmatrix} \right]}
\newcommand{\lrc}[1]{\lceil #1 \rceil}
\newcommand{\lconj}[2]{{}^{#1}#2}
\theoremstyle{plain}
\newtheorem{theorem}{Theorem}[section]
\newtheorem{lemma}[theorem]{Lemma}
\newtheorem{proposition}[theorem]{Proposition}
\newtheorem{corollary}[theorem]{Corollary}
\theoremstyle{definition}
\newtheorem{definition}[theorem]{Definition}
\newtheorem{remark}[theorem]{Remark}
\theoremstyle{remark}
\numberwithin{equation}{section}
\begin{document}

\title[Restrictions to the derived group]{Restricting toral supercuspidal representations to the derived group, and applications}
\author{Monica Nevins}
\address{Department of Mathematics and Statistics, University of Ottawa, Ottawa, Canada K1N 6N5}
\email{mnevins@uottawa.ca}
\thanks{This research is supported by a Discovery Grant from NSERC Canada.}
\keywords{supercuspidal representation; branching rules; p-adic representations; quaternionic division algebra; maximal compact open subgroups; derived group}
\subjclass{20G05}  %22E50 (20G25)
\date{\today}

\begin{abstract}
We determine the decomposition of the restriction of a length-one toral supercuspidal representation of a connected reductive group to the algebraic derived subgroup, in terms of parametrizing data, and show this restriction has multiplicity one. 
As an application, we determine the smooth dual of the unit group of integers $\OD^\times$ of a quaternion algebra $\D$ over a $p$-adic field $\ratF$, for $p\neq 2$, as a consequence of determining the branching rules for the restriction of representations of $\D^\times \supset \OD^\times \supset \D^1$.  
\end{abstract}

\maketitle

\section{Introduction}

Let $\G$ be a connected reductive algebraic group defined over  a local non-archimedean field $\ratF$, and set $G = \G(\ratF)$.  Under certain tameness assumptions, all irreducible supercuspidal representations of $G$ may be constructed in a uniform way, starting from generic cuspidal $G$-data \cite{Adler1998,Yu2001,Kim2007}.  J.~Hakim and F.~Murnaghan \cite{HakimMurnaghan2008} determined the equivalence classes of $G$-data which give rise to isomorphic supercuspidal representations.  
In this paper we consider the subset of \emph{generic toral cuspidal $G$-data of length one} (here abbreviated: $G$-data) and their corresponding supercuspidal representations.  

Let $\G^1$ denote the derived group of $\G$; then this is a connected semisimple group over $\ratF$.  Set $G^1 = \G^1(\ratF)$ and note that this may be strictly larger than the commutator subgroup of $G$.  Restricting a $G$-datum $\Psi$ to $G^1$ produces a datum $\Psi^1$ for $G^1$; in Proposition~\ref{P:generic} we show that $\Psi^1$ is in fact a $G^1$-datum, and that the associated representation $\pi_{G^1}(\Psi^1)$ occurs in the restriction to $G^1$ of the representation $\pi_G(\Psi)$. 
We deduce the full decomposition of the restriction of $\pi_G(\Psi)$ into irreducible (supercuspidal) representations of $G^1$ in Theorem~\ref{T:decomp}.  

In particular the  decomposition of these supercuspidals upon restriction to the derived group has multiplicity one, providing a large class of examples for which \cite[Conjecture 2.6]{AdlerPrasad2006} does hold (although there exist counterexamples to the conjecture in general~\cite{Adlercomm}).  In related work, recently K.~Choiy \cite{Choiy2014} has studied the multiplicities in the restriction to $\SL(n,\mathcal{D})$ of discrete series representations of $\GL(n,\mathcal{D})$, where $\mathcal{D}$ is a central division algebra over $\ratF$.

Our results hold modulo certain hypotheses, which for a tamely ramified group are satisfied when $p$ is sufficiently large.  For example, the simple criterion of genericity that we use here requires $p$ not to be a torsion prime for the dual root datum of $\G$.

It should be possible to generalize these results to $G$-data of length greater than one using the results in \cite{HakimMurnaghan2008} as here.  The most difficult step of the construction, as outlined by J.K.~Yu in \cite{Yu2001} and done in full detail by J.~Hakim and F.~Murnaghan in \cite{HakimMurnaghan2008}, is the consistent choice of Heisenberg-Weil lift; this is a key step for the branching rules as well.  On the other hand, the case of $G$-data of length zero reduces to the case of depth-zero representations, and hence to the analogous question of branching rules for cuspidal representations of Lie groups of finite type.

As an application, we consider the group $\D^\times$, for $\D$ a quaternion algebra over a local non-archimedean field $\ratF$ of odd residual characteristic. The group of $F$-points of its algebraic derived group, which coincides with its commutator subgroup, is $\D^1$, the subgroup of elements of reduced norm $1$.  The representation theory of $\D^\times$ is well-known, having been determined by L.~Corwin and R.~Howe in \cite{Corwin1974, CorwinHowe1977, Howe1971}; that of %    The representation theory of 
$\D^1$ is described in \cite{Misaghian2005} for example.  We give the branching rules for the restriction of representations of $\D^\times$ to $\D^1$ in Section~\ref{S:branching}.

What is more interesting is the representation theory of the maximal compact open subgroup $\ODt$ of $\D^\times$, which coincides with the group of invertible elements of the integer ring of $D$.  This is not a $p$-adic group, and as such, the methods of the classification of \cite{Adler1998,Yu2001} do not apply.  It is an open problem to classify representations of such groups, which are algebraic groups over local rings.

Using in part the branching rules for $\D^\times$ to $\D^1$ established above, we determine the full representation theory of $\ODt$.  Furthermore, in Section~\ref{SS:class} we prove a parametrization of these representations by equivalence classes of $\ODt$-data, in analogy with the classification for the $p$-adic groups $\D^\times$ and $\D^1$.  

This paper is organized as follows.  We set our notation and recall the notion of genericity for positive-depth quasi-characters of tori in Section~\ref{S:gen}, where we relate these notions for $G$ and $G^1$.   We discuss a key ingredient of the construction, the Heisenberg-Weil lift, in Section~\ref{S:heisenberg}, following \cite{HakimMurnaghan2008}, and prove Proposition~\ref{P:commutes}, which is essential to relating different $G^1$-data in later sections.  

In Section~\ref{S:Yu} we recall the construction of toral supercuspidal representations of length one, following \cite{HakimMurnaghan2008,Yu2001}.   We prove some additional properties of this parametrization in Lemma~\ref{L:equiv1} and Proposition~\ref{P:equiv2}.  Section~\ref{S:derived} gives the branching rules for the restriction of toral supercuspidal representations of length one of $G$ to $G^1$, where the main result is Theorem~\ref{T:decomp}.

We then turn to the case of $G=\D^\times$.  We recall known facts about $G$ and $G^1$, including their representation theory, and prove some needed technical results in Section~\ref{S:division}.  In Section~\ref{S:branching} we apply the preceding to determine the branching rules of the restriction of representations of $\D^\times$ to each of $\D^1$ and $\ODt$.  We use these results to give a classification of the irreducible representations of $\ODt$, up to equivalence, in Theorem~\ref{T:classificationOD}.

An original motivation for considering the branching rules for the pair $(\D^\times, \ODt)$ was to compare them to those for the pair $(\GL(2,\ratF),\GL(2,\OF))$ via the Jacquet-Langlands matching theorem.  We conclude with some remarks on this point in Section~\ref{S:matching}.

\section{Notation and Genericity} \label{S:gen}

Let $\ratF$ be a local nonarchimedean field with residue field $\resF$ and residual characteristic $p$, with integer ring $\OF$, prime ideal $\PF$ with uniformizer $\p$, and valuation function $\val$.   We fix a character $\psi$ of $\ratF$ which is trivial on $\PF$ but nontrivial on $\OF$.  When $\extF$ is an extension field of $\ratF$ then $\valextF$ is normalized to coincide with $\val$ on $\ratF$, and we also choose an extension $\psiE$ of $\psi$ to $\extF$, trivial on the prime ideal $\Pext$. Let $\mu_n\subset \mathbb{C}^\times$ denote the group of $n$th roots of unity. % Let $\ep$ denote a nonsquare in $\OF^\times$.   

Let $\G$ be a connected reductive group defined and tamely ramified over $\ratF$.  Denote by $\G^1 = [\G,\G]$ its derived group and set $G=\G(\ratF)$, $G^1=\G^1(\ratF)$.  Let $Z$ denote the center of $G$.  

We assume that $p$ is sufficiently large for: the existence of generic elements in the Lie algebra ($p$ must not be bad for $G$ \cite[\S 7]{Yu2001}), the decomposition of the Lie algebra of $G$ in the proof of Proposition~\ref{P:generic} ($p > k(\G)$, the order of the kernel of the central isogeny $Z(\G)\times \G^1 \to \G$); the work with the Heisenberg-Weil lift ($p>2$ \cite[\S 2.3]{HakimMurnaghan2008}); and the construction of positive-depth toral supercuspidal representations to apply ($\G$ split over a tamely ramified extension of $F$).  We refer the reader to the excellent discussion in \cite[\S 1]{AdlerRoche2000}.  For the case $G=\D^\times$ considered starting in Section~\ref{S:division}, $p>2$ suffices.

Let $\buil(\G,\ratF)$ denote the (enlarged) Bruhat-Tits building of $\G$ over $\ratF$; then the reduced building $\buil^{red}(\G,\ratF)$ is identified with $\buil(\G^1,\ratF)=\buil^{red}(\G^1,\ratF)$.
To each $x \in \buil^{red}(\G,\ratF)$ and $r \in \real_{\geq 0}$ we associate the corresponding Moy-Prasad filtration subgroups $G_{x,r}$ and $G_{x,r+}$ as in \cite{MoyPrasad1994}.  When $\T$ is a tamely ramified maximal torus of $\G$, these give well-defined filtrations $T_r$ of $T=\T(\ratF)$ and of its Lie algebra.  For any extension field $\extF$ over which $\T$ is split and any $x$ in the apartment $\apart(\G,\T,\extF)$ of $\buil^{red}(\G,\extF)$ corresponding to $\T$, we have filtrations $\G_\alpha(\extF)_{x,r}$ of each root subgroup $\G_\alpha(\extF)$ of $\G(\extF)$ corresponding to $(\G,\T)$.  There are corresponding filtrations, for $r\in \real$, of the Lie algebra and of its dual.  We refer the reader to \cite[\S 2.5]{HakimMurnaghan2008}, for example, for a summary of the many useful properties of these filtrations.

Recall that the \emph{depth} of a representation $\rho$ of $G$ is defined to be the least $r\in \real_{\geq 0}$ such that for some $x\in \buil^{red}(\G,\ratF)$, $\rho$ contains vectors invariant under $G_{x,r+}$.

Let $T$ be a maximal torus of $G$ with Lie algebra $\LieT = \LieT(\ratF)$.
For each $r>0$ we have an isomorphism $e \colon \LieT_r/\LieT_{r+} \to T_r/T_{r+}$ and any character of $\LieT_r/\LieT_{r+}$ is given by $X \mapsto \psi(\langle X^\ast, X \rangle)$ for some $X^\ast \in \LieT^\ast_{-r}$.  

Choose an extension field $\extF$ of $\ratF$ over which $\T$ splits, and let $\Phi = \Phi(\G,\T,\extF)$ be the corresponding root system.  For each $\alpha \in \Phi$, the coroot $\alpha^\vee \colon \G_m \to \T$ is defined over $\extF$ and has linearization at $1$ the element $H_\alpha = d\alpha^\vee(1)\in \LieT(\extF)_0$.  Thus for any $X^\ast \in \LieT^\ast_{-r}$, one has  $\valextF(\langle X^\ast, H_\alpha \rangle) \geq -r$. %The following definition is taken from \cite[\S 8]{Yu2001}; it is a version on the dual Lie algebra of the %; genericity requires equality to hold.  

\begin{definition}
An element $X^\ast \in \LieT^\ast_{-r}$ is \emph{$\G(\ratF)$-generic of depth $-r$} if for each $\alpha \in \Phi$,  $\valextF(\langle X^\ast, H_\alpha \rangle) = -r$.
\end{definition}

This definition is taken from \cite[\S 8]{Yu2001}.  Genericity is closely related to the notion of a \emph{good element} of the Lie algebra, defined in \cite{Adler1998}; in this sense the following observation is the analogue of \cite[Lemma 5.9]{AdlerRoche2000}.

Let $T=\T(\ratF)$ be a maximal torus of $G$ and set $\Stor = \T\cap \G^1$.  Then $S=\Stor(\ratF)=T\cap G^1$ is a maximal torus of $G^1$  and for each $r\geq 0$ $S_r=T_r\cap G^1$. Let $Z$ be the center of $G$.  Denote their Lie algebras over $\ratF$ by the corresponding letters $\LieG, \LieG^1, \LieT, \LieS, \LieZ$.  By \cite[Proposition 3.1]{AdlerRoche2000} we have $\LieT = \LieZ \oplus \LieS$ and $\LieS_r = \LieT_r \cap \LieG^1$ for all $r\in\real$.  We may identify $\LieS^\ast$ with the set of $X^\ast \in \LieT^\ast$ which are trivial on $\LieZ$, and reciprocally for $\LieZ^\ast$;  then we have a $T$-invariant decomposition $\LieT^\ast = \LieZ^\ast \oplus \LieS^\ast$.  We may thus uniquely write an element $X^\ast \in \LieT^\ast_{-r}$ as $Z^\ast+Y^\ast$, with $Z^\ast \in \LieZ^\ast_{-r}$ and $Y^\ast \in \LieS^\ast_{-r}$.
Now let $\extF$ be a splitting field of $\T$ (or of $\Stor$) and $\Phi = \Phi(\G,\T,\extF)=\Phi(\G^1,\Stor,\extF)$.  We have $\spn_{\extF}\{H_\alpha \mid \alpha \in \Phi\} = \LieS(\extF)$.  We observe as a consequence that $X^\ast$ is $G$-generic of depth $-r$ if and only if $Y^\ast$ is $G^1$-generic of depth $-r$.

A character $\phi$ of $T$ of positive depth $r$ factors to a representation of $T_r/T_{r+} \cong \LieT_r/\LieT_{r+}$, where it is realized as
$$
\phi(e(X)) = \psi(\langle X^\ast, X \rangle)
$$
for some $X^\ast \in \LieT^\ast_{-r}$; we say $\phi$ is \emph{realized by} $X^\ast$.  Evidently many characters are realized by the same $X^\ast$.  The character $\phi$ is called \emph{$G$-generic of depth $r$} if $X^\ast$ is $G$-generic of depth $-r$.  

\begin{proposition} \label{P:generic}
Let $\T$ be a maximal torus of $\G$ and $T=\T(\ratF)$.  Then a character $\phi$ of $T$ is $G$-generic of depth $r$ if and only if its restriction to $T\cap G^1$ is $G^1$-generic of depth $r$.
\end{proposition}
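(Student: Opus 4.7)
The plan is to exploit the two observations made in the paragraph immediately preceding the proposition: the filtered decomposition $\LieT^\ast_{-r} = \LieZ^\ast_{-r} \oplus \LieS^\ast_{-r}$, and the fact that each $H_\alpha$ (for $\alpha \in \Phi$) lies in $\LieS(\extF)$. The argument reduces to chasing a realizing element of $\phi$ through these decompositions.

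First I would fix a realizing element $X^\ast \in \LieT^\ast_{-r}$ for $\phi$ and decompose $X^\ast = Z^\ast + Y^\ast$ with $Z^\ast \in \LieZ^\ast_{-r}$ and $Y^\ast \in \LieS^\ast_{-r}$. The key computation is that $\phi|_{T \cap G^1}$ is realized by $Y^\ast$: since $Z^\ast$ vanishes on $\LieS$ by construction, for any $X \in \LieS_r = \LieT_r \cap \LieG^1$ one has
$$
\phi(e(X)) = \psi(\langle X^\ast, X\rangle) = \psi(\langle Z^\ast, X\rangle + \langle Y^\ast, X\rangle) = \psi(\langle Y^\ast, X\rangle),
$$
so $\phi|_S$ factors through $S_r/S_{r+}$ and is realized by $Y^\ast \in \LieS^\ast_{-r}$.

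Next I would use that $Z^\ast$ annihilates $\LieS$ and that each $H_\alpha$ lies in $\LieS(\extF)$: this gives $\langle X^\ast, H_\alpha\rangle = \langle Y^\ast, H_\alpha\rangle$ for every $\alpha \in \Phi$, so $X^\ast$ is $G$-generic of depth $-r$ if and only if $Y^\ast$ is $G^1$-generic of depth $-r$. Chaining the two steps produces the stated equivalence: $\phi$ is $G$-generic of depth $r$ iff $X^\ast$ is $G$-generic of depth $-r$ iff $Y^\ast$ is $G^1$-generic of depth $-r$ iff $\phi|_S$ is $G^1$-generic of depth $r$.

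I expect no serious obstacle; the argument is a direct unwinding of the definitions once the decomposition is in place. The only point worth flagging is that in the generic case $\phi|_S$ really has depth exactly $r$ (not lower): genericity forces $\valextF(\langle Y^\ast, H_\alpha\rangle) = -r$, so $Y^\ast \notin \LieS^\ast_{-r+}$ and $\phi|_S$ is nontrivial on $S_r/S_{r+}$, as required for the right-hand side of the iff to be meaningful.
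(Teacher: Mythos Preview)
Your argument is correct and is essentially the same as the paper's: both take a realizing element $X^\ast$, decompose it as $Z^\ast + Y^\ast$ along $\LieT^\ast = \LieZ^\ast \oplus \LieS^\ast$, observe that $Y^\ast$ realizes $\phi|_S$ by orthogonality, and then invoke the observation that $X^\ast$ is $G$-generic iff $Y^\ast$ is $G^1$-generic (since each $H_\alpha \in \LieS(\extF)$). You have simply written out the details the paper compresses into the phrases ``since this decomposition is orthogonal'' and ``the result follows from the observation above,'' and your closing remark about the depth of $\phi|_S$ being exactly $r$ is a useful clarification the paper leaves implicit.
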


\begin{proof}
Suppose $\phi$ is a character of $T$ of depth $r$ and let $X^\ast \in \LieT^\ast_{-r}$ realize $\phi$ on $T_r$.  Decompose $X^\ast = Z^\ast + Y^\ast$ with $Z\in \LieZ^\ast_{-r}$ and $Y^\ast \in \LieS^\ast_{-r}$; then $\Res_{S_r}\phi$ is realized by $Y^\ast$ since this decomposition is orthogonal.  The result follows from the observation above.
\end{proof}

\section{On Heisenberg $p$-groups and Weil representations} \label{S:heisenberg}

We summarize some essential components in the construction of supercuspidal representations from \cite[\S 2.3]{HakimMurnaghan2008}.

Let $(W,\langle,\rangle)$ be a finite-dimensional symplectic vector space over $\mathbb{F}_p$.  Endow the set $W \times \mathbb{F}_p$ with the group operation $(w,z)(w',z') = (w+w',z+z'+\frac12\langle w, w' \rangle)$, and denote the resulting Heisenberg group $W^\sharp$. % or $W \boxtimes \mathbb{F}_p$.  

 For any choice of nontrivial central character, there is a unique corresponding irreducible representation $\tau$ of $W^\sharp$ by the Stone--von Neumann Theroem.  Because $W^\sharp$ carries a natural action of $\Sp(W)$, $\tau$ extends to a representation $\widehat{\tau}=(\tau_S,\tau)$ of the group $\Sp(W)\ltimes W^\sharp$, called the Heisenberg-Weil lift of $\tau$ \cite[Definition 2.17]{HakimMurnaghan2008}.  This extension is unique in all but one case (which occurs only if $p=3$); in that case, a particular extension has been designated in \cite[\S 2.4]{HakimMurnaghan2008}, attached to the choice of central character of $\tau$.

An abstract $p$-Heisenberg group is a group $H$ which is isomorphic to some $W^\sharp$.  Given any such isomorphism, 
its restriction to the center $Z$ of $H$ induces a map $\mu \colon Z \to \mathbb{F}_p$.  Fixing an isomorphism
$$
\kappa \colon \mu_p\subset \mathbb{C}^\times \to \mathbb{F}_p
$$
allows us to factor $\mu$ uniquely as $\kappa \circ \phi$, for some nontrivial character $\phi$ of $Z$.  In this way $\phi$ alone determines the induced symplectic structure on $H/Z$, which is given on $h,h'\in H$ by $\langle hZ, h'Z\rangle =\kappa(\phi([h,h']))$. %We write $\phi = \phi(\nu)$.

Therefore conversely, given such a pair $(H,\phi)$, $(H/Z)^\sharp$ is a Heisenberg group and there exist (many) isomorphisms $\nu \colon H \to (H/Z)^\sharp$.  Following \cite[Definition 2.29, Remark 2.33]{HakimMurnaghan2008} we say the isomorphism $\nu$ is \emph{special} if it takes the form $\nu(h) = (hZ,\mu(h))$ and the map $\mu \colon H\to \mathbb{F}_p$ restricts to the character $\kappa \circ \phi$ on $Z$.  It follows that any two special isomorphisms differ by at most an $\mathbb{F}_p$-valued character of $H/Z$.  By \cite[Lemma 2.35]{HakimMurnaghan2008}, any split polarization of $H$ induces a special isomorphism.  

Let $\Sp(H)$ denote the group of automorphisms of $H$ which act by the identity on $Z$.  Any isomorphism $\nu \colon H \to W^\sharp$ induces an isomorphism $\nu_\ast \colon \Sp(H)\to \Sp(W^\sharp)$.  The natural inclusion $\Sp(W)\to \Sp(W^\sharp)$ thereby induces an action of $\Sp(W)$ on $H$ depending on $\nu$.  This allows us to construct the semi-direct product $\Sp(W)\ltimes_\nu H$, which is a group isomorphic to $\Sp(W)\ltimes W^\sharp$ via $1\times \nu$.

Now let $T$ be a group equipped with a homomorphism $f\colon T \to \Sp(H)$.  

\begin{definition}(\cite[Definition 3.17]{HakimMurnaghan2008})
The isomorphism $\nu \colon H \to W^\sharp$ is \emph{relevant} for $f$ 
if the image of the map $\nu_\ast \circ f \colon T \to \Sp(W^\sharp)$ lies in the subgroup $\Sp(W)$.
In this case we write $f_\nu$ for the induced homomorphism $f_\nu \colon T \to \Sp(W)$. 
\end{definition}

In other words, $\nu$ is relevant for $f$ if and only if $f$ induces a group homorphism $f_\nu \times 1 \colon T\ltimes H \to \Sp(W) \ltimes_\nu H$.

Now let $\nu \colon H \to (H/Z)^\sharp$ be a special isomorphism corresponding to the central character $\phi$ and relevant for $f \colon T \to \Sp(H)$.  Let $\tau$ be an irreducible representation of the Heisenberg group $H$ with central character $\phi$.  Then via $\nu$ there is a well-defined Heisenberg-Weil lift of $\tau$ to a representation $\widehat{\tau} = (\tau_S,\tau)$ of $\Sp(H/Z)\ltimes_\nu H$.  Pulling this map back via $f_\nu \times 1$ yields  a representation $\omega$ of $T\ltimes H$, given by
$$
\omega(t,h) = \tau_S(f_\nu(t))\tau(h)
$$
for all $t\in T$, $h\in H$. By \cite[Lemma 3.21]{HakimMurnaghan2008}, the isomorphism class of $\omega$ depends only on the choices of $\phi$ and $f$ and not on $\nu$.

\begin{proposition} \label{P:commutes}
For $i\in \{1,2\}$ let $H_i$ be a Heisenberg group with center $Z_{i}$.  Fix a nontrivial character $\phi_i$ of $Z_i$ and a corresponding special isomorphism $\nu_i \colon H_i \to W_i^\sharp$ where $W_i=H_i/Z_i$.  Let $T_i$ be a group and suppose further that $\nu_i$ is relevant for a homomorphism $f_i \colon T_i \to Sp(H_i)$.
Suppose we have a group isomorphism $\alpha \colon H_1 \to H_2$, inducing $\overline{\alpha}\colon W_1 \to W_2$,  and a group homomorphism $\delta \colon T_1 \to T_2$ such that the following diagrams commute:
$$
\xymatrix{
H_1 \ar@{->}^{\nu_1}[r]\ar@{->}^{\alpha}[d] & W_1^\sharp\ar@{->}^{\overline{\alpha}\times id}[d]  && T_1  \ar@{->}^{(f_1)_{\nu_1}}[rr]\ar@{->}^{\delta}[d] && \Sp(W_1)\ar@{->}^{\inn(\overline{\alpha})}[d]   \\
 H_2 \ar@{->}^{\nu_2}[r]  & W_2^\sharp && T_2  \ar@{->}^{(f_2)_{\nu_2}}[rr] && \Sp(W_2).}
$$
Then we have $\phi_2 \circ \alpha = \phi_1$ on $Z_1$ and $\overline{\alpha}$ is a symplectic isomorphism, whence the maps in the diagram above are well-defined homomorphisms.
%where $\overline{\alpha}\colon W_1 \to W_2$ is the symplectic isomorphism induced by $\alpha$.
Let $(\tau_2,V)$ be a Heisenberg representation of $H_2$ with central character $\phi_2$.  Then $\tau_1 = \tau_2 \circ \alpha$ is a Heisenberg representation of $H_1$ on $V$ with central character $\phi_1$.  Let $\omega_i$ denote the pullback of the Heisenberg-Weil lift corresponding to $\nu_i$ of $\tau_i$ to $T_i \ltimes H_i$.  Then for all $t\in T_1, h\in H_1$ we have
$$
\omega_2(\delta(t),\alpha(h))=\omega_1(t,h),
$$
that is, the lifts coincide.
\end{proposition}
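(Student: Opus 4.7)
The first step is to extract the compatibilities $\phi_2\circ\alpha=\phi_1$ and symplecticity of $\overline{\alpha}$ from the left commutative diagram. Writing $\nu_i(h)=(hZ_i,\mu_i(h))$ as in the definition of a special isomorphism, commutativity forces $\mu_2\circ\alpha=\mu_1$ on $H_1$. Since $\alpha$ is a Heisenberg group isomorphism it carries $Z_1$ to $Z_2$, and restricting to $Z_1$ gives $\kappa\circ\phi_2\circ\alpha=\kappa\circ\phi_1$, whence $\phi_2\circ\alpha=\phi_1$. For symplecticity, combine the commutator formula $\langle hZ_i, h'Z_i\rangle_i=\kappa(\phi_i([h,h']))$ with $\alpha([h,h'])=[\alpha(h),\alpha(h')]$ and the identity just obtained; this makes $\overline{\alpha}\times id\colon W_1^\sharp\to W_2^\sharp$ a Heisenberg group isomorphism and $\inn(\overline{\alpha})\colon\Sp(W_1)\to\Sp(W_2)$ a group isomorphism, so that all the maps in the diagrams are well defined.

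Next I would observe that $\tau_1=\tau_2\circ\alpha$ is a Heisenberg representation of $H_1$ on $V$ with central character $\phi_1$, which is a direct consequence of $\phi_2\circ\alpha=\phi_1$. Transporting via $\nu_i^{-1}$ to obtain a representation $\sigma_i=\tau_i\circ\nu_i^{-1}$ of $W_i^\sharp$ with central character $\kappa\circ\phi_i$, the identity $\nu_2\circ\alpha=(\overline{\alpha}\times id)\circ\nu_1$ rearranges to $\sigma_1=\sigma_2\circ(\overline{\alpha}\times id)$; thus $\sigma_1$ is the pullback of $\sigma_2$ along $\overline{\alpha}\times id$.

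The heart of the argument is to invoke naturality of the Heisenberg-Weil lift under Heisenberg isomorphisms. By the Stone--von Neumann theorem the extension is unique away from $p=3$, so the pullback of $\widehat{\sigma}_2=((\sigma_2)_S,\sigma_2)$ from $\Sp(W_2)\ltimes W_2^\sharp$ along $(\inn(\overline{\alpha}),\overline{\alpha}\times id)$ must coincide with $\widehat{\sigma}_1$, both being extensions of $\sigma_1$. Consequently $(\tau_1)_S(g)=(\tau_2)_S(\overline{\alpha}g\overline{\alpha}^{-1})$ for all $g\in\Sp(W_1)$. Combining this with commutativity of the right diagram, $(f_2)_{\nu_2}\circ\delta=\inn(\overline{\alpha})\circ(f_1)_{\nu_1}$, I would conclude
$$
\omega_2(\delta(t),\alpha(h))=(\tau_2)_S((f_2)_{\nu_2}(\delta(t)))\,\tau_2(\alpha(h))=(\tau_1)_S((f_1)_{\nu_1}(t))\,\tau_1(h)=\omega_1(t,h).
$$
The main obstacle is the exceptional $p=3$ case of \cite[\S 2.4]{HakimMurnaghan2008}, where the extension is not unique and a particular one is designated; one must verify that this designated extension is itself natural under $\overline{\alpha}\times id$, which should follow from the construction being attached to the central character together with the already-established identity $\phi_2\circ\alpha=\phi_1$.
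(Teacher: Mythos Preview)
Your proposal is correct and follows essentially the same approach as the paper: extract $\phi_2\circ\alpha=\phi_1$ and the symplecticity of $\overline{\alpha}$ from the specialness of the $\nu_i$ and the first commuting square, then use uniqueness of the Heisenberg--Weil extension (with the designated choice at $p=3$ determined by the compatible central characters) to identify the pullback of $\widehat{\tau}_2$ with $\widehat{\tau}_1$, and finally read off the equality $\omega_2(\delta(t),\alpha(h))=\omega_1(t,h)$ from the second commuting square. The only cosmetic difference is that the paper packages the key step as commutativity of a triangle in a single large diagram on $\Sp(W_i)\ltimes_{\nu_i}H_i$, whereas you transport to $W_i^\sharp$ via $\sigma_i=\tau_i\circ\nu_i^{-1}$ and phrase it as $(\tau_1)_S=(\tau_2)_S\circ\inn(\overline{\alpha})$; these are the same argument.
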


\begin{proof} Since $\nu_i$ is special, restricting the first diagram to $Z_1\subset H_1$ yields $\phi_1=\phi_2\circ \alpha$, and it follows that the symplectic forms on $W_1$ and $W_2$ agree via $\overline{\alpha}$.  The commutativity of the first diagram of the hypothesis ensures that the induced map
$$
\inn(\overline{\alpha})\times \alpha \colon \Sp(W_1)\ltimes_{\nu_1}H_1 \to \Sp(W_2)\ltimes_{\nu_2}H_2
$$
is a homomorphism.  Since $\nu_i$ is relevant for $f_i$, the commutativity of the second diagram of the hypothesis implies that the square in the following diagram commutes:
$$
\xymatrix{
T_1 \ltimes H_1 \ar@{->}^{(f_1)_{\nu_1}\times id}[rrr]\ar@{->}^{\delta \times \alpha}[d]
&&& \Sp(W_1)\ltimes_{\nu_1} H_1 \ar@{->}^{\widehat{\tau_1}}[drr]\ar@{->}^{\inn(\overline{\alpha}) \times \alpha}[d] &&
 \\
T_2 \ltimes H_2 \ar@{->}^{(f_2)_{\nu_2}\times id}[rrr]
&&& \Sp(W_2)\ltimes_{\nu_2} H_2  \ar@{->}^{\widehat{\tau_2}}[rr]
&& GL(V).}
$$
When the Heisenberg-Weil extension is unique, the commutativity of the triangle is immediate.  When instead $p=3$, we note that the compatibility of the choices of extensions of $\tau_i$ to $\Sp(W_i)\ltimes_{\nu_i}H_i$ follows from their explicit dependence on the (compatible) central characters.  The result follows.
\end{proof}

\section{Toral supercuspidal representations of length one}\label{S:Yu}

We summarize the construction of irreducible supercuspidal representations of positive depth arising from toral generic $G$-data of length one.  We assume throughout that the maximal torus $\T$ is tamely ramified over $\ratF$.  We follow the presentation in \cite{HakimMurnaghan2008}.  

If $p$ is sufficiently large to ensure all maximal tori of $\G$ are tamely ramified, then these representations may exhaust the set of supercuspidal representations.  For example, when  $G$ is of rank one (over a separable closure), all irreducible supercuspidal representations of $G$ of positive depth arise either in this way, or else as a twist by a positive-depth character of $G$ of a depth-zero representation.  More generally, this is true of any connected reductive group whose longest tamely ramified twisted Levi sequence (in the sense of \cite[\S 2]{Yu2001}) has two factors, such as $\GL_n$, for $n$ a prime.

\subsection{The datum}   A \emph{generic toral $G$-datum of length one} (abbreviated: $G$-datum) consists of: $T = \T(\ratF$), where $\T$ is a (tamely ramified) minisotropic maximal torus of $\G$, defined over $\ratF$;  a point $y \in \buil^{red}(\G,\ratF) \cap \apart(\G,\T,\extF)$, where $\extF$ is a splitting field of $T$; a $G$-generic quasi-character $\phi$ of $T$ of positive depth $r$; and a quasi-character $\chi$ of $G$ which is either trivial or else of depth $\tilde{r} \geq r$.

\begin{remark}
In \cite{Yu2001}, the construction depends on the choice of $y$ in the enlarged building, but by \cite[Remark 3.10]{HakimMurnaghan2008} we deduce that in the toral case it depends only on the image of $y$ in $\buil^{red}(\G,\ratF)$, which in turn is uniquely determined by the minisotropic torus $T$.
\end{remark}

We abbreviate such a datum as $\Psi = (T,y,\phi,r,\chi)$.  For $g\in G$ we set $\lconj{g}{\Psi}=  (\lconj{g}{T},g\cdot y, \lconj{g}{\phi},r,\chi)$, where $\lconj{g}{T} := gTg^{-1}$ and $\lconj{g}{\phi}$ is the corresponding representation of $\lconj{g}{T}$.  Here and throughout we apply the convention that $s=r/2$.

\subsection{The construction of $\rhop$ } \label{S:Yuconstruction}
The main step is the construction of a representation $\rhop$ of $TG_{y,s}$ from the subset $(T,y,\phi,r)$ of the $G$-datum.  We summarize it here, primarily following the detailed presentation in \cite[\S 2.3 and 3.3]{HakimMurnaghan2008}.

Let $\extF$ be a splitting field of $T$ and set $\Phi = \Phi(\G,\T,\extF)$.  We consider $y$ as an element of $\buil^{red}(\G,\extF)$. 
Define $$J(\extF) = \langle \T(\extF)_r, \G_\alpha(\extF)_{y,s} \mid \alpha \in \Phi\rangle$$ and $$J_+(\extF) = \langle \T(\extF)_r, \G_\alpha(\extF)_{y,s+}\mid \alpha \in \Phi\rangle.$$  
Note that $\T(\extF)J(\extF)=\T(\extF)\G(\extF)_{y,s}$ and $\T(\extF)J_+(\extF)=\T(\extF)\G(\extF)_{y,s+}$.  

The character $\phi$ of $T$ is realized on $T_r/T_{r+} \cong \LieT_r/\LieT_{r+}$ by an element $X^\ast \in \LieT_{-r}^\ast$, via the fixed additive character $\psi$ of $\ratF$.  Extension of scalars from $\resF$ to the residue field of $\extF$ produces from $X^\ast$ a unique linear functional on $\LieT(\extF)_r/\LieT(\extF)_{r+}$ which via $\psiE \circ e$ similarly defines a character $\phiE$ of $\T(\extF)_r/\T(\extF)_{r+}$.  The restriction of $\phiE$ to $T_r$ coincides with $\phi$. 
We may extend $\phiE$ trivially across the groups $\G_\alpha(\extF)_{y,s+}$, $\alpha \in \Phi$,  to produce the character $\widehat\phiE$ of $J_+(\extF)$.

If $J(\extF)=J_+(\extF)$ then let $J=J(\extF)\cap G$ and $\widehat\phi = \Res_{J}\widehat{\phiE}$.  Then it follows that $\phi$ and $\widehat\phi$ together extend to a unique character of $TJ=TG_{y,s}$, which we denote  $\rhop$.  
Note that in this case, one can equally define $\rhop$ without passing to the splitting field \cite[\S 4]{Yu2001}.  We evidently have $\Res_T(\rhop) = \phi$.

Now suppose that  $J(\extF)\neq J_+(\extF)$.  Set $N(\extF)=\ker(\widehat\phiE)$.  The index of $N(\extF)$ in $J_+(\extF)$ is $p$, since this quotient is isomorphic to an additive group of characteristic $p$; in fact $J_+(\extF)/N(\extF) \cong \T(\extF)_r/\ker(\phiE)$.
One verifies that $H(\extF)=J(\extF)/N(\extF)$ is an abstract Heisenberg group over $\mathbb{F}_p$ with center $Z_{H(\extF)} = J_+(\extF)/N(\extF)$.  Since $\phi$ is generic, the construction of $\phiE$ above ensures that the form $\langle hZ_{H(\extF)},h'Z_{H(\extF)}\rangle = \kappa(\phiE([h,h']))$ is nondegenerate and we may set $W(\extF)= (J(\extF)/J_+(\extF),\langle,\rangle)$, a symplectic vector space over $\mathbb{F}_p$.

Since as symplectic vector spaces we have $W(\extF) \cong \oplus_{\alpha \in \Phi}\LieG_\alpha(\extF)_{y,s}/\LieG_\alpha(\extF)_{y,s+}$, we may choose a polarization $W(+)\oplus W(-)$ of $W(\extF)$ where $W(\pm)$ is spanned by the positive (respectively, negative) root spaces.  These Lagrangian subspaces $W(\pm)$ lift to subgroups of $H(\extF)$, thus providing a well-defined splitting of $H(\extF)$.  This implies that each
 $g\in H(\extF)$ may be factored uniquely as $g=g_+g_-g_0$ with $g_{\pm}\in W(\pm)$ and $g_0 \in Z_{H(\extF)}$ and thus the map
$$
\mu(g) = \kappa(\widehat{\phiE}(g_0))+\frac12\langle g_+,g_-\rangle
$$
 defines a special isomorphism $\nu_\extF\colon H(\extF)\to W(\extF)^\sharp$ given by $\nu_\extF(h) = (hZ_{H(\extF)},\mu(h))$ \cite{HakimMurnaghan2008}.

Finally, let $J,J_+$ and $N$ denote the intersections with $G$ of the corresponding groups over $\extF$.  Set $H=J/N$, $W=J/J_+$ and $Z_H = J_+/N \cong Z_{H(\extF)}$.   Since $T$ is minisotropic, it acts by conjugation on $J$, preserving $J_+$ and $N$.  By \cite[Lemma 2.32]{HakimMurnaghan2008}, the restriction $\nu$ of $\nu_\extF$ to $H$ is a special isomorphism with $W^\sharp$, relevant for the map $f'\colon T \to \Sp(H)$ induced by conjugation, and independent of the choice of extension $\psiE$.  By \cite[Lemma 3.18]{HakimMurnaghan2008}, the induced homomorphism $f_\nu \colon T \to \Sp(W)$ coincides with the conjugation action $f$ of $T$ on $W=J/J_+$.

Let $\tau$ denote a Heisenberg representation of $H$ with central character $\phi$, and let $\widehat{\tau} = (\tau_S,\tau)$ denote its Heisenberg-Weil lift to $\Sp(W)\ltimes_\nu H$.  Then the pullback representation of $T \ltimes J$ is given on  $t\in T$ and $j\in J$ by
$$
\omega(t,j)= \tau_S(f(t))\tau(j). 
$$
Set $\rhop(tj) = \phi(t)\omega(t,j)$; this is well-defined and is the representation of $TJ=TG_{y,s}$ we sought.
Note that by \cite[Theorem 11.5]{Yu2001}, $\Res_{T_{0+}}\rhop$ is $\phi$-isotypic; but in general this is not true of $\Res_{T_{0}}\rhop$ due to the presence of the term $\tau_S(f(t))$.

\subsection{The representation $\pi_G(\Psi)$}

Let $\Psi = (T,y,\phi,r,\chi)$ be a $G$-datum.  Construct the representation $\rhop$ of $TG_{y,s}$ from the subset $(T,y,\phi,r)$ as above.   Then
$$
\rho_G(\Psi) = \chi \rhop
$$
is a representation of $TG_{y,s}$.
 The following is a special case of results in \cite{Adler1998,Yu2001}.

\begin{theorem}
The representation
$$
\pi_G(\Psi) = \cind_{TG_{y,s}}^G \rho_G(\Psi)
$$
is an irreducible supercuspidal representation (of depth $r$ if $\chi$ is trivial, else of depth equal to that of $\chi$).
\end{theorem}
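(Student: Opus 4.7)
The plan is to invoke Mackey's irreducibility criterion together with the standard fact that a representation compactly induced from an open subgroup which is compact modulo the center is automatically supercuspidal once it is irreducible. First I would verify that $\rho_G(\Psi) = \chi\,\rhop$ is a genuine representation of $TG_{y,s}$: by Section~\ref{S:Yuconstruction} the representation $\rhop$ is well-defined on $TJ = TG_{y,s}$, and multiplying by the quasi-character $\chi$ of $G$ (restricted to $TG_{y,s}$) preserves this. Note that since $\T$ is minisotropic, $T/Z$ is compact, so $TG_{y,s}$ is open and compact modulo $Z$.

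The core step is the computation of the intertwining set
\[
I_G(\rho_G(\Psi)) = \bigl\{g \in G : \Hom_{TG_{y,s} \cap \lconj{g}{(TG_{y,s})}}(\rho_G(\Psi),\,\lconj{g}{\rho_G(\Psi)})\neq 0\bigr\},
\]
with the goal of showing $I_G(\rho_G(\Psi)) \subseteq TG_{y,s}$. This is where the generic hypothesis on $\phi$ plays its decisive role. Genericity of $\phi$ of depth $r$ means that the realizing element $X^\ast \in \LieT^\ast_{-r}$ pairs nontrivially with every coroot $H_\alpha$. Combined with the Moy-Prasad structure of $G_{y,s}$, this forces any $g$ intertwining $\rho_G(\Psi)|_{G_{y,s}}$ with its conjugate to stabilize the coset $y + \LieT^\ast$ modulo deeper terms, which via Yu's key intertwining argument (\cite[\S 11,\S 14]{Yu2001}, c.f.\ also \cite[\S 3.3]{HakimMurnaghan2008}) implies $g \in TG_{y,s}$; note that the $\chi$ twist contributes nothing to the intertwining set because $\chi$ is a character of the ambient group $G$.

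Once the intertwining is pinned down to $TG_{y,s}$ itself, Mackey's criterion gives the irreducibility of $\cind_{TG_{y,s}}^G \rho_G(\Psi)$. Since $TG_{y,s}$ is compact modulo $Z$, the matrix coefficients of $\pi_G(\Psi)$ are compactly supported modulo $Z$, so $\pi_G(\Psi)$ is supercuspidal. For the depth, one uses that $\rhop$ has depth exactly $r$ (this is built into the Heisenberg-Weil construction of Section~\ref{S:Yuconstruction} and is \cite[Theorem 11.5]{Yu2001}), so when $\chi$ is trivial the depth is $r$; when $\chi$ has depth $\tilde r \geq r$, the character $\chi$ dominates on the filtration and the depth of $\chi\rhop$ is $\tilde r$.

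The main obstacle is the intertwining calculation. The Heisenberg-Weil twist $\tau_S \circ f$ inside $\rhop$ means that intertwining is not read off directly from $\phi$; one must control how conjugation by $g \in G$ interacts simultaneously with $\phi$, with the symplectic module $W = J/J_+$, and with the Weil component of $\rhop$. The generic hypothesis on $\phi$ is precisely what tames this interaction, by ensuring that the centralizer of $X^\ast$ in the appropriate Lie-theoretic sense is $\LieT$, so that intertwining elements are forced into $T$ times the parahoric subgroup fixing $y$.
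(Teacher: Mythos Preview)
The paper does not supply a proof of this theorem at all: it is stated as ``a special case of results in \cite{Adler1998,Yu2001}'' and left at that. Your proposal, by contrast, sketches the actual argument underlying those cited works --- Mackey's irreducibility criterion, the intertwining calculation $I_G(\rho_G(\Psi)) \subseteq TG_{y,s}$ driven by genericity of $\phi$, and supercuspidality from compactness of $TG_{y,s}$ modulo $Z$. This is a faithful outline of Yu's method (\cite[\S 9, \S 14, \S 15]{Yu2001}); in that sense your approach is not ``different'' from the paper's but rather unpacks the citation the paper defers to.

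One small caution: your sketch compresses the hardest part. The intertwining calculation in the Heisenberg case ($J \neq J_+$) is not a routine consequence of genericity alone; Yu's argument passes through a careful analysis of the representation $\widehat{\phi}$ on $J_+$ and then separately handles the Weil component. If you intend this as a self-contained proof rather than as a summary of \cite{Yu2001}, you would need to spell out why an element intertwining the full $\rhop$ (not just $\widehat{\phi}$) lies in $JTJ$ and then in $TG_{y,s}$. As written, though, your references to \cite[\S 11, \S 14]{Yu2001} are appropriate pointers, and for the purposes of this paper a bare citation is all that is expected.
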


We omit the subscript $G$ on $\pi$ and $\rho$ where there is no possibility of confusion.

\subsection{Properties of the parametrization}

J.~Hakim and F.~Murnaghan \cite{HakimMurnaghan2008} determined when two $G$-data give rise to equivalent supercuspidal representations, modulo a hypothesis called $C(\vec{G})$, which is satisfied in the toral case.  We summarize their results for the particular $G$-data we consider here.

\begin{proposition}[Hakim-Murnaghan] \label{P:HM}
Let $\Psi = (T,y,\phi,r,\chi)$ and $\Psi' = (T',y',\phi',r',\chi')$ be two (toral, length-one, generic) $G$-data.
Then
\begin{enumerate}
\item If $T=T'$, $r=r'$ and $\chi\phi = \chi'\phi'$, then $\rho(\Psi) \cong \rho(\Psi')$.
\item We have $\pi(\Psi) \cong \pi(\Psi')$ if and only if there exists $g\in G$ such that $T' = \lconj{g}{T}$, $r=r'$ and $\chi'\phi'=\lconj{g}{(\chi\phi)}$ as characters of $T'$.
\end{enumerate}
\end{proposition}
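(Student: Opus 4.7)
My plan is to deduce both statements from the refactorization and intertwining theory of \cite{HakimMurnaghan2008}, specialized to the particularly constrained setting of toral $G$-data of length one, where the twisted Levi sequence collapses to $(T,G)$, the hypothesis $C(\vec G)$ is automatic, and the class of admissible equivalences between data reduces substantially.

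For part (1), I first note that fixing $T = T'$ together with minisotropy forces $y = y'$, by the remark following the definition of $G$-datum, so $\rhop$ and $\rhopp$ are both representations of the common group $TG_{y,s}$ built from the common Heisenberg datum associated to $(T,y,r)$. Writing $\delta = \chi' \chi^{-1}$, the hypothesis $\chi\phi = \chi'\phi'$ becomes $\phi' = \delta|_T \phi$, where $\delta$ is a character of $G$ of depth at least $r$. Since $\delta$ is trivial on $G^1$ and the root subgroups generating $J$ lie in $G^1$, and since any shift in the realizing functional $X^\ast \in \LieT^\ast_{-r}$ coming from $\delta|_T$ takes values in $\LieZ^\ast_{-r}$ and therefore annihilates every coroot $H_\alpha \in \LieS$, the symplectic form on $W$ and the Heisenberg-Weil representation obtained from $\phi'$ are canonically identified with those obtained from $\phi$. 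A direct comparison using $\rho(\Psi)(tj) = \chi(tj)\phi(t)\tau_S(f(t))\tau(j)$, together with the independence-from-$\nu$ statement Proposition~\ref{P:commutes}, then gives $\rhopp \cong \delta\, \rhop$ as representations of $TG_{y,s}$, and hence $\chi'\rhopp \cong \chi \rhop$, as required.

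Part (2) then follows from (1) and the compact-induction machinery. For the ``if'' direction, the conjugate datum $\lconj{g}\Psi = (\lconj{g}T, g\cdot y, \lconj{g}\phi, r, \chi)$ has the same torus and product character $\chi\cdot \lconj{g}\phi = \lconj{g}(\chi\phi) = \chi'\phi'$ as $\Psi'$, so part (1) gives $\rho(\lconj{g}\Psi) \cong \rho(\Psi')$, and compact induction yields $\pi(\Psi) \cong \pi(\lconj{g}\Psi) \cong \pi(\Psi')$. For the ``only if'' direction, an isomorphism $\pi(\Psi) \cong \pi(\Psi')$ implies by Frobenius reciprocity and Mackey theory the existence of $g \in G$ and a nonzero intertwiner between $\rho(\Psi)$ and the $g$-conjugate of $\rho(\Psi')$ on the intersection of their supports. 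The main classification theorem of \cite{HakimMurnaghan2008}, applied in our toral length-one setting, then interprets the existence of such an intertwiner as precisely the conjugacy $T' = \lconj{g}T$, the equality $r = r'$, and the character identity $\chi'\phi' = \lconj{g}(\chi\phi)$ on $T'$.

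The main technical obstacle is in part (1): confirming that the Heisenberg-Weil construction really is insensitive to moving a depth-$r$ or higher piece between $\phi$ and the twist $\chi$. When the extension of the Heisenberg representation to $\Sp(W) \ltimes_\nu H$ is unique the argument is automatic, but in the exceptional case $p=3$ one must invoke the designated lifts of \cite[\S 2.4]{HakimMurnaghan2008}, and Proposition~\ref{P:commutes} is the tool that keeps the construction independent of the choice of special isomorphism. A smaller but real subtlety in the ``only if'' half of (2) is the passage from a local intertwining on $TG_{y,s}$ to a global character identity on all of $T'$; the minisotropy of $T$ handles this cleanly, since any conjugating element automatically sends the unique reduced-building point of $T$ to that of $T'$, leaving no residual matching to check.
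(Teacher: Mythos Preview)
The paper does not actually prove this proposition: it cites \cite[Proposition~4.24]{HakimMurnaghan2008} for part~(1) (as an instance of refactorization) and \cite[Corollary~6.10]{HakimMurnaghan2008} for part~(2), remarking that the underlying arguments are ``detailed and complex''.  Your treatment of part~(2) follows the same pattern---cite the classification theorem for the ``only if'' direction---and your ``if'' direction is correct as written.

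Your attempt to flesh out part~(1), however, has a genuine gap.  First, the assertion that $\delta=\chi'\chi^{-1}$ has depth at least $r$ is simply false (take $\chi=\chi'$, or more generally $\chi,\chi'$ agreeing on $G_{y,r}$).  More substantively, when $\delta\vert_{T_r}$ is nontrivial---which can occur whenever $\chi$ and $\chi'$ both have depth exactly $r$---the characters $\widehat\phi$ and $\widehat{\phi'}$ of $J_+$ differ, their kernels $N$ and $N'$ differ, and hence the Heisenberg quotients $H=J/N$ and $H'=J/N'$ are genuinely different groups with different central characters.  Your claim that the two Heisenberg--Weil representations are ``canonically identified'' therefore requires producing an explicit isomorphism $H\to H'$ (morally, twisting by $\delta\vert_J$), checking that the special isomorphisms $\nu,\nu'$ and the $T$-actions are compatible with it, and then verifying the commutative diagrams in the hypotheses of Proposition~\ref{P:commutes}.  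You invoke that proposition but do not set up any of its inputs.  That bookkeeping, together with the compatibility of the designated Weil lifts in the $p=3$ case, is precisely the content of the refactorization argument in \cite{HakimMurnaghan2008} that you are trying to shortcut, and is why the paper defers to that reference rather than reproving it.
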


The first statement is an example of refactorization, and thus follows from \cite[Proposition 4.24]{HakimMurnaghan2008}.  The second, incorporating $G$-conjugacy, is \cite[Corollary 6.10]{HakimMurnaghan2008}.  The proofs of these results involve a detailed and complex analysis of the construction of $\rho$ vis-\`a-vis defined notions of elementary transformations, refactorization and $G$-conjugacy.

Note that from the first statement one may also deduce that if $\chi$ is a character of $G$ of depth less than $r$, then
$\rhop(T,y,(\Res_T\chi)\phi,r) = \chi\rhop(T,y,\phi,r)$.  By our choice of definition of $G$-datum, we always incorporate a twist by a central character into the toral character when its depth is smaller; this shows there is no loss of generality in doing so.

We note the following additional properties of the construction.

\begin{lemma} \label{L:equiv1}
Let $\Psi = (T,y,\phi,r,\chi)$ be a $G$-datum.
\begin{enumerate}[(a)]
\item The center $Z$ acts by the character $\chi\phi$ in $\rho(\Psi)$, and in $\pi(\Psi)$. % to $Z \subset T$ is $\chi\phi$-isotypic.
\item If $\Psi'=(T,y,\phi',r,\chi')$ is another $G$-datum such that $\Res_{T_{r}}\phi = \Res_{T_{r}}\phi'$ then the corresponding pullbacks of the Heisenberg-Weil representation are the same, that is, $\omega = \omega'$.
\item If $\Psi'=(T,y, \phi',r,\chi')$ is another $G$-datum such that $\Res_{T_{0}}\phi = \Res_{T_{0}}\phi'$ then $\Res_{T_0G_{y,s}}\rhop(\Psi) =  \Res_{T_0G_{y,s}}\rhop(\Psi')$. 
\item If $\gamma\in G$ then $\lconj{\gamma}{\Psi}$ is a $G$-datum and $\lconj{\gamma}{\rho(\Psi)} \cong \rho(\lconj{\gamma}{\Psi})$.
\end{enumerate}
\end{lemma}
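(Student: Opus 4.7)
For part (a), use the formula $\rhop(tj) = \phi(t)\tau_S(f(t))\tau(j)$ on $TG_{y,s}$ and note that $Z$ acts trivially on $W = J/J_+$ by conjugation, so $f(z) = 1 \in \Sp(W)$ and $\tau_S(f(z)) = \mathrm{id}$ for every $z \in Z$. Setting $j = 1$ yields $\rhop(z) = \phi(z)$, whence $\rho(\Psi) = \chi\rhop$ has central character $(\chi\phi)|_Z$. Since $\pi(\Psi)$ is obtained from $\rho(\Psi)$ by compact induction, it inherits the same central character.

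For parts (b) and (c), trace the dependence of $\omega$ on $\phi$: each ingredient in its construction is determined by the functional $X^\ast \in \LieT^\ast_{-r}/\LieT^\ast_{-r+}$ that realizes $\phi|_{T_r}$. Specifically, $X^\ast$ determines $\widehat{\phiE}$ and hence $N$, the Heisenberg group $H = J/N$, the symplectic form on $W$, and the polarization-based special isomorphism $\nu$; the central character of $\tau$ on $Z_H = J_+/N$ is likewise determined by $\phi|_{T_r}$, fixing $\tau$ up to isomorphism; and $f \colon T \to \Sp(W)$ is conjugation, independent of $\phi$. Thus $\phi|_{T_r} = \phi'|_{T_r}$ forces $\omega = \omega'$, giving (b). For (c), since $T_r \subset T_0$, (b) applies, and then $\phi|_{T_0} = \phi'|_{T_0}$ together with the formula for $\rhop$ gives $\rhop(\Psi) = \rhop(\Psi')$ on $T_0 G_{y,s}$.

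For part (d), first check that $\lconj{\gamma}{\Psi}$ is a $G$-datum: $\lconj{\gamma}{T}$ remains tame and minisotropic, $\gamma\cdot y$ lies in the apartment of $\lconj{\gamma}{T}$ over the same splitting field, the depth is unchanged, and genericity of $\lconj{\gamma}{\phi}$ follows from the identity $\valextF(\langle \gamma\cdot X^\ast, H_\alpha\rangle) = \valextF(\langle X^\ast, H_{\gamma^{-1}\alpha}\rangle) = -r$, where $\gamma$ permutes the absolute roots. For the representation isomorphism, invoke Proposition~\ref{P:commutes} with $\alpha = \inn(\gamma) \colon H_\Psi \to H_{\lconj{\gamma}{\Psi}}$ (an isomorphism because conjugation by $\gamma$ sends $J, J_+, N$ of $\Psi$ to those of $\lconj{\gamma}{\Psi}$) and $\delta = \inn(\gamma) \colon T \to \lconj{\gamma}{T}$, together with the special isomorphisms attached to each datum. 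The conclusion of the proposition yields $\omega_{\lconj{\gamma}{\Psi}}(\inn(\gamma)t, \inn(\gamma)j) = \omega_\Psi(t,j)$, which combined with $\lconj{\gamma}{\phi}(\inn(\gamma)t) = \phi(t)$ and $\chi(\inn(\gamma)t) = \chi(t)$ gives $\lconj{\gamma}{\rho(\Psi)} \cong \rho(\lconj{\gamma}{\Psi})$.

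The main obstacle will be verifying the two commuting-diagram hypotheses of Proposition~\ref{P:commutes} in part (d), since the special isomorphism $\nu$ depends on an auxiliary choice of polarization for $W$. This is mitigated by the fact, cited after the construction of $\omega$, that $\omega$ is independent of $\nu$ (\cite[Lemma 3.21]{HakimMurnaghan2008}); it therefore suffices to exhibit a single pair of polarizations for $\Psi$ and $\lconj{\gamma}{\Psi}$ compatible under $\inn(\gamma)$, and one obtains such a pair by transporting a chosen polarization of $W_\Psi$ by $\gamma$, since $\gamma$ permutes the root groups.
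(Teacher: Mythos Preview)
Your proposal is correct and follows essentially the same approach as the paper's proof. For parts (a)--(c) your arguments match the paper's almost verbatim; for part (d) both you and the paper reduce to Proposition~\ref{P:commutes} with $\alpha=\delta=\inn(\gamma)$, and your remark that one may transport the polarization of $W_\Psi$ by $\gamma$ to obtain a compatible special isomorphism for $\lconj{\gamma}{\Psi}$ is exactly how the paper verifies the first commuting diagram (the second diagram, being conjugation throughout, is immediate).
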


\begin{proof}
We adopt the notation of Section~\ref{S:Yuconstruction}.  To see that the restriction of  $\rho(\Psi)$ to $Z\subseteq T$ is $\chi\phi$-isotypic is
immediate from the construction if $TG_{y,s}=TG_{y,s+}$.  Otherwise, since the conjugation action of $Z$ on $J$ and hence on $W$ is trivial, %$Z$ lies in the kernel of the map $f\colon T\to \Sp(W)$, whence 
$\tau_S\circ f$ is trivial on $Z$.  Part (a) follows.
For part (b) we assume $TG_{y,s}\neq TG_{y,s+}$.  Note that if $\phi$ and $\phi'$ are characters of depth $r$ coinciding on $T_r$, then they restrict to the same character of $T_r/T_{r+}$.  Thus part (b) is the observation that the dependence of $\omega$ on $\phi$ is limited to the restriction of $\phi$ to this quotient.
Part (c) follows immediately from part (b), and the definition of $\rhop$.

Part (d) is implicit in \cite{HakimMurnaghan2008}.  That $\lconj{\gamma}{\Psi}$ is a $G$-datum is immediate.  For the rest, it suffices to show that the corresponding pullbacks of the Heisenberg-Weil representations to $\lconj{\gamma}T\ltimes \lconj{\gamma}{J}$ coincide, which we do here using Proposition~\ref{P:commutes}. 

We use a subscript $\gamma$ to denote an object in the construction corresponding to the datum $\lconj{\gamma}{\Psi}$.  Since $J_\gamma(\extF)=\lconj{\gamma}{J(\extF)}$ and ${J_\gamma}_+(\extF)=\lconj{\gamma}{J_+(\extF)}$, the result is immediate if $J(\extF)=J_+(\extF)$.

So suppose  $J(\extF)\neq J_+(\extF)$.  The character $\widehat{(\phiE)_\gamma}$ of $\lconj{\gamma}{J_+(\extF)}$ coincides with $\lconj{\gamma}{\widehat{\phi_E}}$, whose kernel is $N(\extF)_\gamma = \lconj{\gamma}{N(\extF)}$.  Similarly, we have $H(\extF)_\gamma = \lconj{\gamma}{H(\extF)}$ and $W(\extF)_\gamma= \lconj{\gamma}{W(\extF)}$.  Moreover, the symplectic form on $W(\extF)_\gamma$ is given by 
$$
\langle \lconj{\gamma}{x},\lconj{\gamma}{y} \rangle_\gamma = \kappa(\lconj{\gamma}{\widehat{\phi_E}}([\lconj{\gamma}{x},\lconj{\gamma}{y}]))=\kappa(\widehat{\phi_E}([x,y])) = \langle x,y \rangle.
$$
It follows that the polarization used in the construction of $\nu_\gamma$ is $W(\pm)_\gamma \cong \lconj{\gamma}{W(\pm)}$.  
Thus for any $h\in H(\extF)$ we have $\mu_\gamma(\lconj{\gamma}{h}) = \mu(h)$, yielding
$$
(\nu_\extF)_\gamma(\lconj{\gamma}{h})=
(\lconj{\gamma}{h}Z_{\lconj{\gamma}{H(\extF)}}, \mu_\gamma(\lconj{\gamma}{h})) 
= (\lconj{\gamma}{(hZ_{H(\extF)})},\mu(h)). %=\lconj{\gamma}{\nu_\extF(h)}
$$
Descending now to $\ratF$, this implies that conjugation by $\gamma$ gives isomorphisms $\alpha \colon H \to H_\gamma$ and $\overline{\alpha}\colon W\to W_\gamma$ such that the first of the following diagrams
$$
\xymatrix{
H \ar@{->}^{\nu}[r]\ar@{->}^{\alpha}[d] & W^\sharp\ar@{->}^{\overline{\alpha}\times id}[d]  && T  \ar@{->}^{f}[rr]\ar@{->}^{\delta}[d] && \Sp(W)\ar@{->}^{\inn(\overline{\alpha})}[d]   \\
 H_\gamma \ar@{->}^{\nu_\gamma}[r]  & W_\gamma^\sharp && \lconj{\gamma}{T}  \ar@{->}^{f_{\gamma}}[rr] && \Sp(W_\gamma)}
$$
commutes.  Next, letting $\delta \colon T \to \lconj{\gamma}{T}$ denote the conjugation map, we see directly that the second diagram commutes, all maps being the expected conjugations.  

Thus Proposition~\ref{P:commutes} applies.  Let $(\tau_\gamma,V)$ be a Heisenberg representation of $H_\gamma$ with central character $\lconj{\gamma}{\phi}$; then $\tau=\tau_\gamma\circ \alpha$ is a Heisenberg representation of $H$ with central character $\phi$.  Let $\omega_\gamma$ and $\omega$ denote the pullbacks of the Heisenberg-Weil lifts of $\tau_\gamma$ and $\tau$, respectively.  We conclude that for all $t\in T$ and $h\in H$, $\omega_\gamma(\lconj{\gamma}{t},\lconj{\gamma}{h}) = \omega(t,h)$, whence $\omega_\gamma = \lconj{\gamma}{\omega}$.  Recalling that any other choice of $\tau$ with the given central character gives a representation isomorphic to $\omega$, the result follows.
\end{proof}

\begin{proposition} \label{P:equiv2}
Let $\Psi = (T,y,\phi,r,\chi)$ be a $G$-datum.  
Set $S=G^1\cap T$, $\phi^1 = \Res_{G^1}\phi$ and $\chi^1 = \Res_{G^1}\chi$.  
Define
$$
\Psi^1 = \begin{cases}
(S,y,\phantom{\chi^1}\phi^1,r,\chi^1) & \textrm{if the depth of $\chi^1$ is at least $r$, and}\\
(S,y,\chi^1\phi^1,r,1) & \textrm{otherwise}.
\end{cases}
$$
Then $\Psi^1$ is a generic toral length-one $G^1$-datum, called the \emph{restriction of $\Psi$ to $G^1$},  and $\Res_{SG^1_{y,s}}\rho_G(\Psi) \cong \rho_{G^1}(\Psi^1)$.
\end{proposition}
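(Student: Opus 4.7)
The plan is to verify that $\Psi^1$ is a bona fide $G^1$-datum, reduce the isomorphism claim to a statement about the underlying $\rhop$-constructions stripped of their $\chi$-twists, and then compare the two Heisenberg--Weil constructions via Proposition~\ref{P:commutes}.  Checking each piece of the $G^1$-datum definition for $\Psi^1$: since $\T$ is tamely ramified and minisotropic in $\G$, the subtorus $\Stor = \T \cap \G^1$ is tamely ramified and minisotropic in $\G^1$; the point $y$ lies in $\buil^{red}(\G^1, \ratF) = \buil^{red}(\G, \ratF)$ and in $\apart(\G^1, \Stor, \extF)$, where $\extF$ simultaneously splits $\T$ and $\Stor$.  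For genericity, if the depth of $\chi^1$ is at least $r$, then $\phi^1 = \Res_S \phi$ is $G^1$-generic of depth $r$ by Proposition~\ref{P:generic}; if the depth of $\chi^1$ is less than $r$, then $\chi^1$ is trivial on $S_r$, so $\chi^1\phi^1$ has depth $r$ on $S$ and restricts to $\phi^1$ on $S_r$, letting Proposition~\ref{P:generic} apply once more.

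For the isomorphism claim, we first reduce to showing $\Res_{SG^1_{y,s}}\rhop(T,y,\phi,r) \cong \rhop(S,y,\phi^1,r)$, where the right-hand $\rhop$ denotes the analogous construction inside $G^1$.  In the case that the depth of $\chi^1$ is at least $r$, this suffices because $\rho_G(\Psi) = \chi\,\rhop(T,y,\phi,r)$ and $\Res_{G^1}\chi = \chi^1$.  In the other case, Proposition~\ref{P:HM}(1) rewrites $\rho_{G^1}(\Psi^1)$ as $\rhop(S,y,\chi^1\phi^1, r)$ for $G^1$, and Lemma~\ref{L:equiv1}(b)(c) extracts the factor $\chi^1$ (which is trivial on $S_r$) to give $\rho_{G^1}(\Psi^1) = \chi^1\,\rhop(S,y,\phi^1,r)$; the same reduction then applies.

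For the core comparison we exploit the coincidence $\Phi(\G,\T,\extF) = \Phi(\G^1, \Stor, \extF)$ and the matching of root subgroups: these yield inclusions $J^1_+(\extF) \subset J_+(\extF)$ and $J^1(\extF) \subset J(\extF)$ that identify the symplectic quotients $W^1(\extF) \cong W(\extF)$.  Writing $X^\ast = Z^\ast + Y^\ast$ under $\LieT^\ast = \LieZ^\ast \oplus \LieS^\ast$, the functional $Y^\ast$ realizes $\phi^1$ on $S_r$, and the restriction of $\widehat{\phiE}$ to $J^1_+(\extF)$ coincides with $\widehat{\phi^1_{\extF}}$; hence $N^1(\extF) = N(\extF) \cap J^1_+(\extF)$ and the induced map $H^1(\extF) \to H(\extF)$ is an isomorphism (by a cardinality comparison).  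If $J(\extF) = J_+(\extF)$, both $\rhop$'s are characters and agreement follows at once.  Otherwise, we invoke Proposition~\ref{P:commutes} with $\alpha \colon H^1 \to H$ and $\overline{\alpha} \colon W^1 \to W$ the natural isomorphisms, and $\delta \colon S \to T$ the inclusion: the first diagram commutes because the special isomorphisms $\nu^1$ and $\nu$ are built from the same polarization by positive and negative root spaces inside the common $W$, and the second because the conjugation action of $S$ on $J^1$ is the restriction of the action of $T$ on $J$.

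The main obstacle lies in this last step: the careful identification of $H^1$ with $H$ via the inclusion, and the verification that this identification genuinely intertwines the special isomorphisms $\nu^1$ and $\nu$ with $\overline{\alpha} \times \mathrm{id}$.  This demands matching polarizations, coset representatives for $J_+/N$, and the values of the $\mu$-functions used in the construction of the special isomorphisms.  Once this compatibility is in place, Proposition~\ref{P:commutes} delivers the equality of Heisenberg--Weil pullbacks on $S \ltimes J^1$, finishing the proof.
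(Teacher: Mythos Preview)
Your proposal is correct and follows essentially the same route as the paper: verify that $\Psi^1$ is a $G^1$-datum via Proposition~\ref{P:generic}, reduce to the untwisted case (the paper does this in one line via the remark after Proposition~\ref{P:HM}, whereas you treat the two depth cases for $\chi^1$ separately), and then compare the Heisenberg--Weil constructions through the inclusion-induced isomorphisms $H^1\to H$, $W^1\to W$ using Proposition~\ref{P:commutes} and the shared root-space polarization. The only cosmetic differences are that the paper carries out the $H^1\cong H$ identification directly over $\ratF$ rather than over $\extF$, and your appeal to Lemma~\ref{L:equiv1}(c) in the $\chi$-reduction is not quite the right citation (it requires agreement on $S_0$, not just $S_r$); the remark after Proposition~\ref{P:HM}, which you also cite via~\ref{P:HM}(1), already suffices.
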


\begin{proof}
First note that $\Stor = \T \cap \G^1$ is a minisotropic maximal torus of $\G^1$ %which we may without loss of generality 
associated to the same point $y$ of $\buil^{red}(\G^1,\ratF)=\buil^{red}(\G,\ratF)$.  Setting $S=\Stor(\ratF)$, the character $\phi^1 := \Res_S\phi$ is also $G^1$-generic of depth $r$, by Proposition~\ref{P:generic}.   If $\chi^1=\Res_{G^1}\chi$ has depth less than $r$ then $\Res_{S_r}\chi^1\phi^1=\Res_{S_r}\phi^1$, so this character is also generic of depth $r$.  Thus in each case $\Psi^1$ is a (toral, generic, length-one) $G^1$-datum.

By the remarks following Proposition~\ref{P:HM}, it suffices to prove that $\Res_{SG^1_{y,s}}\rho_G(\Psi) \cong \rho_{G^1}(\Psi^1)$ when $\chi=1$.

Let $\extF$ be a splitting field of $T$ and $S$, and denote the groups arising in the construction for $G^1$ with the superscript $1$.
Since for each root $\alpha$, $\G_\alpha(\extF)_{y,s}=\G^1_{\alpha}(\extF)_{y,s}$, the groups $J^1(\extF)$ and $J^1_+(\extF)$ are defined as for $\G$ but with $\T(\extF)_r$ replaced by $\Stor(\extF)_r$.  It follows that $J^1 \subseteq J$ and $\Res_{J^1_+}\widehat{\phi}=\widehat{\phi^1}$.   Since $J^1=J_+^1$ if and only if  $J=J_+$, the result follows directly in this case.

So suppose $J \neq J_+$.  Since $J_+ \cap J^1 = J_+^1$ and $J^1J_+=J$,  the inclusion $\iota \colon J^1 \to J$ induces an isomorphism $\beta \colon W^1 \to W$.  Since $\Res_{J_+^1}\widehat\phi = \widehat{\phi^1}$, the symplectic forms on  $W^1$ and $W$ coincide under $\beta$, so $\beta$ is a symplectic isomorphism.  Moreover, since $\ker\widehat{\phi^1}=\ker\widehat\phi\cap J^1_+$, we have $N^1=N\cap J^1_+=N\cap J^1$, whence $\iota$ induces also an isomorphism $\alpha \colon H^1\to H$ such that $\beta = \overline{\alpha}$.   Finally, since $\nu$ and $\nu^1$ arise from the same polarization of $W \cong W^1$, we deduce that the diagram
$$
\xymatrix{
H^1 \ar@{->}^{\nu^1}[r]\ar@{->}^{\alpha}[d] & {W^1}^\sharp\ar@{->}^{\beta\times id}[d]  \\
 H \ar@{->}^{\nu}[r]  & W^\sharp}
$$
commutes.  The conjugation action of $S$ on $W^1$ and on $W$ being the same, the second hypothesis of Proposition~\ref{P:commutes} is also satisfied, whence we deduce as before that $\Res_{S\ltimes J^1}\omega \cong \omega^1$, and the result follows.
\end{proof}

\section{On restrictions of representations of $G$ to $G^1$} \label{S:derived}

By \cite{Silberger1979}, the restriction of any irreducible representation of $G$ to $ZG^1$, and hence to $G^1$, decomposes as a finite direct sum of irreducible representations.

Let $\Psi= (T,y,\phi,r,\chi)$ and let $\Psi^1$ denote the restriction of $\Psi$ to $G^1$.  We omit the subscripts $G$ and $G^1$ from the representations.  From Proposition~\ref{P:equiv2} we deduce the irreducible representation $\pi(\Psi^1)$ occurs in $\Res_{G^1}\pi(\Psi)$.  Since $G^1$ is normal in $G$, the remaining summands each have the form $\lconj{\gamma}{\pi(\Psi^1)}$, for some $\gamma \in G$.  
On the other hand by Proposition~\ref{P:HM}, $\pi(\Psi) \cong \pi(\lconj{\gamma}{\Psi})$, so it follows that $\pi((\lconj{\gamma}{\Psi})^1)$
also occurs as a summand of $\Res_{G^1}\pi(\Psi)$.

\begin{lemma} \label{L:mackeygen}
Let $\Psi^1$ be the restriction to $G^1$ of a $G$-datum $\Psi$.
Then for each $\gamma \in G$ we have $(\lconj{\gamma}{\Psi})^1=\lconj{\gamma}{(\Psi^1)}$ and
$$
\lconj{\gamma}{\pi(\Psi^1)} \cong  \pi(\lconj{\gamma}{\Psi^1}).
$$
\end{lemma}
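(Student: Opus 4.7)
The first assertion is a matter of unwinding definitions. Writing $\lconj{\gamma}{\Psi} = (\lconj{\gamma}{T}, \gamma\cdot y, \lconj{\gamma}{\phi}, r, \chi)$, normality of $G^1$ in $G$ gives $\lconj{\gamma}{T}\cap G^1 = \lconj{\gamma}{(T\cap G^1)} = \lconj{\gamma}{S}$. Moreover, the quasi-character $\chi$ of $G$ factors through the abelianization, so its restriction $\chi^1 = \Res_{G^1}\chi$ is $G$-conjugation invariant; in particular the case-split in the definition of $\Psi^1$ is unchanged upon replacing $\Psi$ by $\lconj{\gamma}{\Psi}$. These observations together yield $(\lconj{\gamma}{\Psi})^1 = \lconj{\gamma}{(\Psi^1)}$ in both cases.

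For the isomorphism of representations, my plan is to transfer the problem from $G^1$-data to $G$-data, where Lemma~\ref{L:equiv1}(d) applies directly. Applying Proposition~\ref{P:equiv2} to $\lconj{\gamma}{\Psi}$ (using part (1) just established) and then invoking Lemma~\ref{L:equiv1}(d), I would write
\[
\rho_{G^1}(\lconj{\gamma}{\Psi^1}) \cong \Res_{\lconj{\gamma}{S}\,G^1_{\gamma\cdot y,s}}\rho_G(\lconj{\gamma}{\Psi}) \cong \Res_{\lconj{\gamma}{S}\,G^1_{\gamma\cdot y,s}}\lconj{\gamma}{\rho_G(\Psi)}.
\]
The compatibility of conjugation with restriction, namely $\Res_{\lconj{\gamma}{H}}\lconj{\gamma}{\sigma} \cong \lconj{\gamma}{(\Res_H \sigma)}$ combined with the Moy--Prasad identity $\lconj{\gamma}{G^1_{y,s}} = G^1_{\gamma\cdot y,s}$, identifies the right-hand side with $\lconj{\gamma}{(\Res_{SG^1_{y,s}}\rho_G(\Psi))}$, and a final application of Proposition~\ref{P:equiv2} to $\Psi$ itself rewrites this as $\lconj{\gamma}{\rho_{G^1}(\Psi^1)}$.

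To finish, I would compactly induce to $G^1$. Since $\gamma \in G$ normalizes $G^1$, compact induction is equivariant under the inner automorphism of $G^1$ induced by $\gamma$:
\[
\pi(\lconj{\gamma}{\Psi^1}) = \cind_{\lconj{\gamma}{S}\,G^1_{\gamma\cdot y,s}}^{G^1}\rho_{G^1}(\lconj{\gamma}{\Psi^1}) \cong \cind_{\lconj{\gamma}{S}\,G^1_{\gamma\cdot y,s}}^{G^1}\lconj{\gamma}{\rho_{G^1}(\Psi^1)} \cong \lconj{\gamma}{\cind_{SG^1_{y,s}}^{G^1}\rho_{G^1}(\Psi^1)} = \lconj{\gamma}{\pi(\Psi^1)}.
\]
I do not anticipate a substantial obstacle: the argument is essentially a combination of the restriction theorem (Proposition~\ref{P:equiv2}) with $G$-conjugation equivariance (Lemma~\ref{L:equiv1}(d)), together with the standard functoriality of compact induction under inner automorphisms. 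The only point requiring genuine care is keeping track of which subgroups the representations are restricted to, and in particular the Moy--Prasad conjugation identity invoked above.
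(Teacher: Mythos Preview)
Your proof is correct and follows essentially the same approach as the paper: establish $(\lconj{\gamma}{\Psi})^1=\lconj{\gamma}{(\Psi^1)}$ by unwinding definitions, then combine Lemma~\ref{L:equiv1}(d) with Proposition~\ref{P:equiv2} to obtain $\lconj{\gamma}{\rho(\Psi^1)} \cong \rho(\lconj{\gamma}{\Psi^1})$, and finally compactly induce using that $\gamma$ normalizes $G^1$. The paper simply assumes $\chi=1$ without loss of generality rather than tracking the case split, but otherwise your argument and the paper's are the same.
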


\begin{proof}
Let $\Psi = (T,y,\phi,r,\chi)$ be a $G$-datum; we may assume without loss of generality that $\chi=1$.  Let $\gamma\in G$, which normalizes $G^1$, and write $\Psi^1 = (S,y,\phi^1,r)$.
As $\lconj{\gamma}{T}\cap G^1 = \lconj{\gamma}{S}$, we have $\Res_{\lconj{\gamma}{S}}\lconj{\gamma}{\phi} = \lconj{\gamma}{\phi^1}$.  Therefore the restriction of $\lconj{\gamma}{\Psi}=(\lconj{\gamma}{T},\gamma\cdot y, \lconj{\gamma}{\phi}, r, \chi)$ to $G^1$ coincides with the twisted datum $\lconj{\gamma}{\Psi^1} = (\lconj{\gamma}{S},\gamma \cdot y,\lconj{\gamma}{\phi^1},r)$.
By Lemma~\ref{L:equiv1}, $ \lconj{\gamma}{\rho(\Psi)} \cong \rho(\lconj{\gamma}{\Psi})$, so by Proposition~\ref{P:equiv2}, restricting to $G^1$ yields
$\lconj{\gamma}{\rho(\Psi^1)} \cong \rho(\lconj{\gamma}{\Psi^1})$.
It now follows that
$$
\lconj{\gamma}{\pi(\Psi^1)} = \phantom{\big |}^\gamma\left(\cind_{SG^1_{y,s}}^{G^1}\rho(\Psi^1)\right) \cong \cind_{(\lconj{\gamma}{S})G^1_{\gamma\cdot y,s}}^{G^1}\lconj{\gamma}{\rho(\Psi^1)} = \pi(\lconj{\gamma}{\Psi^1}).
$$
\end{proof}

\begin{theorem} \label{T:decomp}
Let $\Psi$ be a $G$-datum and let $\Psi^1$ denote its restriction to $G^1$.  Then $\pi(\Psi)$ decomposes with multiplicity one upon restriction to $G^1$ as
$$
\Res_{G^1} \pi(\Psi) \cong \bigoplus_{\gamma \in G/TG^1} \pi(\lconj{\gamma}{\Psi^1}).
$$
\end{theorem}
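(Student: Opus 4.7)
The plan is to apply Mackey's formula directly to $\pi(\Psi) = \cind_{TG_{y,s}}^G\rho(\Psi)$ and use Proposition~\ref{P:equiv2} to identify each Mackey summand with $\pi(\lconj{\gamma}{\Psi^1})$; this approach simultaneously produces the decomposition and yields multiplicity one. The starting point is the Mackey formula for the restriction of a compact induction,
\[
\Res_{G^1}\pi(\Psi) \;\cong\; \bigoplus_{\gamma \in G^1 \backslash G / TG_{y,s}} \cind_{G^1 \cap \lconj{\gamma}{(TG_{y,s})}}^{G^1} \Res \lconj{\gamma}{\rho(\Psi)}.
\]

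To identify each summand, first observe that the normality of $G^1$ in $G$ gives $G^1 \cap \lconj{\gamma}{(TG_{y,s})} = \lconj{\gamma}{(G^1 \cap TG_{y,s})}$. Using the Moy--Prasad decomposition $G_{y,s} = T_s \cdot G^1_{y,s}$ (with $T_s \cap G^1_{y,s} = S_s$), I would check that $G^1 \cap TG_{y,s} = SG^1_{y,s}$: if $g = t(t_s u)$ with $t \in T$, $t_s \in T_s$, $u \in G^1_{y,s}$ lies in $G^1$, then $tt_s \in T \cap G^1 = S$. Since $\lconj{\gamma}{\Psi}$ is a $G$-datum by Lemma~\ref{L:equiv1}(d), applying Proposition~\ref{P:equiv2} to it gives $\Res_{\lconj{\gamma}{(SG^1_{y,s})}}\lconj{\gamma}{\rho(\Psi)} \cong \rho(\lconj{\gamma}{\Psi^1})$, so the $\gamma$-indexed summand is exactly $\pi(\lconj{\gamma}{\Psi^1})$. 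The same decomposition shows $G_{y,s} \subseteq TG^1$, whence $G^1 \cdot TG_{y,s} = TG^1$, and by normality of $G^1$ the double-coset set $G^1 \backslash G / TG_{y,s}$ is identified with $G/TG^1$.

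The main obstacle is establishing distinctness of the summands---equivalent to multiplicity one. By Proposition~\ref{P:HM} applied inside $G^1$ together with Lemma~\ref{L:mackeygen}, $\pi(\lconj{\gamma_1}{\Psi^1}) \cong \pi(\lconj{\gamma_2}{\Psi^1})$ iff $\gamma_1 \in G^1 \gamma_2 \Delta$, where $\Delta := \{\delta \in G : \lconj{\delta}{S} = S \text{ and } \lconj{\delta}{\phi^1} = \phi^1\}$. Since $T$ is abelian, $T \subseteq \Delta$; conversely, any $\delta \in \Delta$ normalizes $T = Z_G(S)$ and hence represents an element $w$ of the rational Weyl group $W_G(T)$ fixing $\phi^1$. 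By Proposition~\ref{P:generic}, $\phi^1$ is $G^1$-generic of depth $r$, realized by $Y^\ast \in \LieS^\ast_{-r}$ with $\valextF(\langle Y^\ast, H_\alpha\rangle) = -r$ for every root $\alpha$; in particular no root $\alpha$ vanishes on $Y^\ast$, so the coadjoint centralizer of $Y^\ast$ in $\G^1$ is exactly $\Stor$ and the stabilizer of $Y^\ast$ in $W_G(T)$ is trivial. Hence $w = 1$, $\Delta = T$, and distinct cosets of $G/TG^1$ yield non-isomorphic summands, completing the proof.
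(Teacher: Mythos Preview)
Your Mackey decomposition and the identification of each summand with $\pi(\lconj{\gamma}{\Psi^1})$ are correct and essentially match the paper (which packages your use of Lemma~\ref{L:equiv1}(d) and Proposition~\ref{P:equiv2} into Lemma~\ref{L:mackeygen}); the reduction of the index set to $G/TG^1$ is also the same.

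The gap is in your multiplicity-one argument. From $\lconj{\delta}{\phi^1}=\phi^1$ you only get that the Weyl element $w$ fixes the \emph{coset} $Y^\ast+\LieS^\ast_{(-r)+}$, since $Y^\ast$ is determined by $\Res_{S_r}\phi^1$ only modulo $\LieS^\ast_{(-r)+}$. Your regularity assertion (``no root vanishes on $Y^\ast$, so its Weyl stabilizer is trivial'') disposes of a $w$ fixing $Y^\ast$ exactly, but that is not what you have. What is actually needed is that no nontrivial $w$ fixes the image of $Y^\ast$ in $\LieS^\ast_{-r}/\LieS^\ast_{(-r)+}$; this residue-field statement is precisely Yu's condition \textbf{GE2}, which follows from \textbf{GE1} (the paper's definition of genericity) under the standing hypothesis that $p$ is not a torsion prime for the dual root datum. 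So your route can be salvaged by invoking \textbf{GE2}, but as written the step ``hence $w=1$'' is unjustified.

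The paper avoids this issue by a different device: once $g=u\gamma$ (with $u\in G^1$) satisfies $\lconj{g}{S}=S$ and $\lconj{g}{\phi^1}=\phi^1$, one checks $g$ normalizes $T_r=Z_rS_r$ and $J_+$ and fixes the extended character $\widehat{\phi}$ on $J_+$; then Yu's intertwining criterion (property $SC1_0$, \cite[Theorem~9.4]{Yu2001}) forces $g\in JTJ\subseteq TG_{y,s}$, hence $\gamma\in TG^1$. This bypasses the Weyl-stabilizer computation entirely, though of course Yu's theorem itself uses \textbf{GE2} in the background.
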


\begin{proof}
%Let $s = r/2$ where $r$ is the depth of $\phi$.
By Lemma~\ref{L:mackeygen} and the remarks preceding it, we may apply Mackey theory to deduce that
\begin{align*}
\Res_{G^1}\pi(\Psi) &= \Res_{G^1}\cind_{TG_{y,s}}^{G} \rho(\Psi)\\
&\cong \bigoplus_{\gamma \in G^1\backslash G/TG_{y,s}} \cind_{G^1 \cap \lconj{\gamma}{(TG_{y,s})}}^{G^1} \lconj{\gamma}{\rho(\Psi)}\\
&\cong \bigoplus_{\gamma \in G^1\backslash G/TG_{y,s}}\pi(\lconj{\gamma}{\Psi^1}).
\end{align*}
As $G^1$ is normal in $G$, and for $s>0$, $TG_{y,s} = TG^1_{y,s}\subseteq TG^1$, the given decomposition follows.  

To conclude that the summands are distinct, let $\Psi^1=(S,y,\phi^1,r)$ be the restriction of $\Psi=(T,y,\phi,r,\chi)$ and suppose $\gamma\in G$ is such that $\pi(\lconj{\gamma}{\Psi^1})\cong \pi(\Psi^1)$.  By Proposition~\ref{P:HM} there is some $u\in G^1$ such that setting $g=u\gamma$ we have $\lconj{g}{S}=S$, $g\cdot y=y$ and $\lconj{g}{\phi^1}=\phi^1$.  Since $r>0$, we have $T_r = Z_rS_r$; thus $\lconj{g}{T_r}=T_r$ and $\lconj{g}{J_+} = J_+$.  It also follows that $\lconj{g}{\phi}=\phi$ on $T_r$, whence their trivial extensions to $J_+$ coincide; call this character $\widehat{\phi}$.

A key step in the proof of the irreducibility of $\pi_G(\Psi)$ (\cite[Theorem 9.4]{Yu2001}, also called property $SC1_0$) is the assertion that any $g\in G$ intertwining $\widehat{\phi}$ must lie in $JTJ$.  Thus we conclude that $g \in TJ = TG_{y,s}$ and so $\gamma \in TG^1$, whence the summands are distinct.
\end{proof}

\section{Application to the multiplicative group of the quaternion algebra over $\ratF$} \label{S:division}

Let $\D$ be the quaternionic division algebra over $\ratF$.  We give a self-contained summary of the groups  $\D^\times$ and $\D^1$ and recast their (well-known) representation theory in the language of the preceding sections.  A key reference is \cite[\S 53, 54]{BushnellHenniart2006}.  We assume $p>2$; this satisfies all the hypotheses in Section~\ref{S:gen}.  %In this section we prove only results which are not readily found in the literature. 

\subsection{Notation and background on $\D^\times$}
Let $\ep$ denote a nonsquare in $\OF^\times$. 
Then the quaternion algebra $\D = \DD(\ratF)$ over $\ratF$ can be realized as the $\ratF$-algebra with presentation
$$
\langle 1,i,j,k \mid i^2=\ep, j^2=\p, k^2=-\ep \p, ij=k=-ji \rangle.
$$
Given $z=a+bi+cj+dk$ in this presentation, the anti-involution $z \mapsto \overline{z}=a-bi-cj-dk$ defines the (reduced) trace as $\tr(z)=2a$ and the (reduced) norm as $\nrd(z)=a^2-b^2\ep - c^2\p + d^2\ep \p$, both taking values in $\ratF$. 
The ring $\OD = \{z\in \D \mid \nrd(z) \in \OF\}$ is a maximal compact open subring with unique maximal ideal $\PD=\OD j$.  We normalize our valuation in $\ratF$ so that $\val(\p)=1$ and extend it to a valuation on $\D$ or any algebraic extension field of $\ratF$.  In particular note that $\valD(j)=\frac12$.  

The map $\nrd$ is algebraic over $\ratF$, and the derived group of $\DD^\times$ is $\DD^1 = \ker(\nrd)$.  The groups $\D^\times = \DD^\times(\ratF)$ and $\D^1 = \DD^1(\ratF) \subseteq \OD^\times$ are both compact mod centre.  The Lie algebra of $\D^\times$ is $D$ whereas that of $\D^1$ consists of elements of trace zero.  
%The Lie algebra of $\D^1$ is $\D^0=\{z\in \D \mid \tr(z)=0\}$.  
One has $[\D^\times,\D^\times] = \D^1$ \cite[Lemma I.4.1]{JacquetLanglands1970} and $[\D^1,\D^1] = \D^1 \cap (1+\PD)$ \cite[\S5]{Riehm1970}. %, and $\D^\times/\ratF^\times \D^1 \simeq \ratF^\times/(\ratF^\times)^2$.  
%Note in particular that in this case, the $\ratF$-points of the derived group of $\DD^\times$ coincides with the derived group of $\D^\times$.
The center of $\D^\times$ is $Z= \ratF^\times$; via the norm map $Z\D^1$ has index equal to $\vert \ratF^\times/{\ratF^\times}^2 \vert = 4$ in $\D^\times$.

Each quadratic extension $\extF$ of $\ratF$ can be embedded in $\D$, uniquely up to $\D^\times$-conjugacy, and the restriction of the anti-involution $\overline{\cdot}$ to $\extF$ coincides with the action of the nontrivial Galois element.  Furthermore, for each such $\extF$ there is some $\sigma \in \D^\times$ such that $\lconj{\sigma}{z}=\overline{z}$ for all $z\in \extF$; then $\D = \extF \oplus \sigma \extF$.  Note that $\extF^1 := \extF^\times \cap \D^1$ is given by $\{\beta\overline{\beta}^{-1}\mid \beta \in \extF\}$.  The maximal tori of $\D^\times$ are exactly the groups $\extF^\times$, for $\extF$ a quadratic extension of $\ratF$; there are thus three conjugacy classes.   For each maximal torus $T$ of $\D^\times$, it follows from the norm map that $TD^1$ has index $2$ in $\D^\times$ and that the normalizer in $\D^\times$ of $T$ is $N_{\D^\times}(T) = T \sqcup T\sigma$.  %, where $\sigma \in \D^\times \setminus T$ realizes the nontrivial element of $Gal(\extF/\ratF)$.  Thus for our standard torus choices, we may take $\sigma = j$ if $\extF = \unextF$ and $\sigma = i$ otherwise.  

One may choose explicit representatives as follows.  Denote by $\unextF$ the unramified extension field $\ratF[i]$ contained in $\D$; then one may take $\sigma = j$. %for $\beta \in \unextF$, $\overline{\beta} = \lconj{j}{\beta}$.  
%Its residue field is denoted $\resL$, whose norm one elements form the group $\resL^1$.  
Fix $\mu \in \unextF^\times$ satisfying $\nrd(\mu)=\ep$. %; if $-1=z^2$ for some $z\in \ratF$ then we may choose $\mu=zi$.  
Then the two nonconjugate ramified extensions of $\ratF$ in $\D$ are represented by $\ratF[j]$ and $\ratF[\mu j]$; in these cases one may take $\sigma=i$.  %$\overline{\beta} = \lconj{i}{\beta}$.

\begin{lemma}\label{L:tori}
There are three conjugacy classes of maximal tori of $\D^1$ when $-1\in (\ratF^\times)^2$.  Otherwise, for each ramified torus $T$ of $\D^\times$, the tori $\D^1 \cap T$ and $\D^1 \cap \lconj{\mu}{T}$ are not $D^1$-conjugate, and there are a total of five $\D^1$-conjugacy classes.
\end{lemma}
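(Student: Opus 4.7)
The plan begins with the observation that every maximal torus of $\D^1$ has the form $T \cap \D^1$ for $T$ a maximal torus of $\D^\times$, and two such are $\D^1$-conjugate if and only if the ambient tori $T$ are. Since $\D^1$ is normal in $\D^\times$, the number of $\D^1$-orbits inside the $\D^\times$-conjugacy class of $T$ equals the index $[\D^\times : \D^1 \cdot N_{\D^\times}(T)]$, and under the surjection $\nrd \colon \D^\times \to \ratF^\times$ (with kernel $\D^1$) this becomes $[\ratF^\times : \nrd(N_{\D^\times}(T))]$. Using $N_{\D^\times}(T) = T \sqcup T\sigma$, I would reduce to computing $\nrd(T) \cup \nrd(\sigma) \cdot \nrd(T)$ for each of the three representative tori.

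For $T = \unextF^\times$, Hensel's lemma gives $\nrd(\OL^\times) = \OF^\times$ and $\nrd(\p) = \p^2$, so $\nrd(T) = \p^{2\mathbb{Z}} \OF^\times$; combined with $\nrd(\sigma) = \nrd(j) = -\p$, which has odd valuation, the image is all of $\ratF^\times$, so this class contributes a single $\D^1$-orbit. For a ramified torus $T$ with uniformizer $\pi_T \in \{j, \mu j\}$, a unit of $\OF[\pi_T]^\times$ has the shape $a + b\pi_T$ with $a \in \OF^\times$, and $\nrd(a + b\pi_T) = a^2 - b^2 \pi_T^2 \equiv a^2 \pmod{\PF}$ lies in $(\OF^\times)^2$ (using $p > 2$ and Hensel's lemma to get $1 + \PF \subseteq (\OF^\times)^2$); hence $\nrd(\OF[\pi_T]^\times) = (\OF^\times)^2$. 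Combined with $\nrd(\pi_T) \in \{-\p, -\ep\p\}$ and $\nrd(\sigma) = \nrd(i) = -\ep$, the image becomes
\[
\nrd(N_{\D^\times}(T)) = \langle (\OF^\times)^2, -\ep, \nrd(\pi_T)\rangle.
\]

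The decisive point is whether $-\ep \in (\OF^\times)^2$, which (since $\ep$ is a non-square unit by choice) is equivalent to $-1 \notin (\OF^\times)^2$, and in turn (since $-1$ is a unit) to $-1 \notin (\ratF^\times)^2$. When $-1$ is a square in $\ratF^\times$, $-\ep$ is a non-square unit, so $\langle (\OF^\times)^2, -\ep\rangle = \OF^\times$; together with the uniformizer-valuation generator this exhausts $\ratF^\times$, and each ramified class yields a single $\D^1$-orbit, for a total of $1 + 1 + 1 = 3$. When $-1$ is not a square, $-\ep \in (\OF^\times)^2$ is redundant, and $\nrd(N_{\D^\times}(T))$ has index $2$ in $\ratF^\times$; each ramified class splits into two $\D^1$-orbits. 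To identify these orbits concretely as those of $T$ and $\lconj{\mu}{T}$, I would note that $\nrd(\mu) = \ep$ is a non-square unit, while in this case $\nrd(N_{\D^\times}(T)) \cap \OF^\times = (\OF^\times)^2$; hence $\mu \notin \D^1 \cdot N_{\D^\times}(T)$, so $T$ and $\lconj{\mu}{T}$ represent the two distinct $\D^1$-orbits. Applying this to each of the two non-conjugate ramified $\D^\times$-classes $\ratF[j]^\times$ and $\ratF[\mu j]^\times$ gives $1 + 2 + 2 = 5$ in total. The one step beyond bookkeeping is the calculation $\nrd(\OF[\pi_T]^\times) = (\OF^\times)^2$ on ramified tori, which is where the hypothesis $p > 2$ enters through Hensel's lemma.
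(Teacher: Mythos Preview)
Your proof is correct and follows essentially the same approach as the paper's: both reduce to computing the index $[\D^\times : N_{\D^\times}(T)\D^1]$ for each $\D^\times$-conjugacy class of tori, and both use the reduced norm to do so. The paper's argument is terse---it simply asserts that this quotient is nontrivial exactly when $T$ is ramified and $-1\notin(\ratF^\times)^2$, relying on the earlier remark that $T\D^1$ has index~$2$ in $\D^\times$ via the norm map---whereas you carry out the explicit computation of $\nrd(N_{\D^\times}(T))$ in each case. Your added detail (in particular the verification that $\nrd(\Oext^\times)=(\OF^\times)^2$ for ramified $T$ via Hensel, and the check that $\nrd(\mu)=\ep$ lies outside $\nrd(N_{\D^\times}(T))$ when $-1$ is a nonsquare) makes explicit exactly what the paper leaves implicit, but the structure of the two arguments is the same.
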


\begin{proof}
For each maximal torus $S$ of $\D^1$ there is a maximal torus $T$ of $\D^\times$ such that $S=T \cap \D^1$.  For fixed $T$, the set of $\D^1$-conjugacy classes of tori in $\{\lconj{\gamma}{T}\cap \D^1 \mid \gamma \in \D^\times\}$ is parametrized by $\gamma \in \D^\times/N_{\D^\times}(T)\D^1$.  This group is nontrivial if and only if $T$ is ramified and $-1\notin {\ratF^\times}^2$, in which case it has order two and a set of representatives is $\{1,\mu\}$.  
\end{proof}

One deduces that all maximal tori in $\D^1$ are self-normalizing.

\subsection{Genericity of quasi-characters of tori}

The homomorphism $\varphi \colon\D^\times \to \GL_2(\unextF)$ determined  by  $\varphi(i) = \smat{i & 0\\ 0 & -i}$, $\varphi(j) = \smat{0&1 \\ \p & 0}$ is an embedding.  Its image in $\GL_2(\unextF)$ is the set of fixed points under the involution $\Theta(g) = \varphi(j)^{-1}\overline{g}\varphi(j)$.  Thus we can realize the reduced building  $\buil^{red}(\DD,\ratF)$ of $\D^\times$ as the unique fixed point $x$ in $\buil^{red}(\GL_2,\unextF)$ of the automorphism $\Theta$.  For this choice of $\varphi$, the diagonal split torus is $\Theta$-stable, and $x$ lies in the corresponding apartment $\apart \subset \buil^{red}(\GL_2,\unextF)$, where it %is characterized by $\alpha(x)=\frac12$ for $\alpha$ the positive root, and 
is the barycentre of the fundamental chamber.  We can and do omit the subscript $x$ from our notation in this case.
For $\GG \in \{\D^\times, \D^1\}$ the Moy-Prasad filtration  subgroups are simply given by $\GG_{r} = \{g \in \GG \mid \valD(g) \geq r\}$ and $\GG_{r+} = \{g \in \GG \mid \valD(g) > r\}$.    
Note that $\D^\times_0 = \ODt$ and $\D^1_0 = \D^1$.

We note in passing that the notion of a generic character of a maximal torus of $\D^\times$ coincides with the original notion of an admissible character, due to R.~Howe \cite{Howe1971}, as follows.  %From Proposition~\ref{P:generic} we deduce the more common characterization of generic characters of tori of $G$.

\begin{lemma} \label{L:generic}
Any nontrivial quasi-character of a maximal torus of $\D^1$ is $\D^1$-generic.  For $T$ a maximal torus of  $\D^\times$, the quasi-character $\phi$ of $T$ of positive depth $r$ is $\D^\times$-generic if and only if $r = \min\{depth(\chi\phi) \colon \chi \in \widehat{\ratF^\times}\}$ where $\chi \phi := (\chi \circ \nrd)\otimes\phi$.
\end{lemma}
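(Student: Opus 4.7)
The plan is to treat the two statements in sequence, reducing the $\D^\times$ case to the $\D^1$ case via Proposition~\ref{P:generic} together with an analysis of twisting by characters of the form $\chi\circ \nrd$.

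For the first statement, let $\Stor$ be a maximal torus of $\DD^1$ and $\extF$ a splitting field. Since $\DD^1$ is a form of $\SL_2$, the root system $\Phi(\DD^1, \Stor, \extF) = \{\pm\alpha\}$ and $\LieS(\extF)$ is one-dimensional over $\extF$. The key step is to verify that $\val_\extF(H_\alpha) = 0$, so that $H_\alpha$ generates $\LieS(\extF)_0$ as an $\Oext$-module. Given this, any $X^\ast \in \LieS^\ast_{-r}$ of depth exactly $-r$ realizing a positive-depth character $\phi$ automatically satisfies $\val_\extF(\langle X^\ast, H_\alpha\rangle) = -r$, so $\phi$ is $\D^1$-generic. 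I would verify the valuation claim case by case: for the unramified torus $\unextF^1$, using the embedding $\varphi$ of Section~\ref{S:division} one checks $\varphi(i) = i\cdot H_\alpha$ with the scalar $i \in \OL^\times$; the two ramified tori are handled analogously, using embeddings $\D \otimes_\ratF \extF \cong M_2(\extF)$ built from the decomposition $\D = \extF \oplus I \extF$.

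For the second statement, let $\phi$ be realized by $X^\ast \in \LieT^\ast_{-r}$ and decompose $X^\ast = Z^\ast + Y^\ast$ as in Section~\ref{S:gen}. The observation immediately preceding Proposition~\ref{P:generic} implies $\phi$ is $\D^\times$-generic if and only if $Y^\ast$ is $\D^1$-generic of depth $-r$, which by the first part is equivalent to $Y^\ast$ having depth exactly $-r$, i.e., $Y^\ast \notin \LieS^\ast_{(-r)+}$.

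It remains to relate this dichotomy to the minimum-depth criterion. Since the differential $d\nrd = \tr$ vanishes on $\LieS$ and equals $2\cdot\mathrm{id}$ on $\LieZ$, a character $\chi$ of $\ratF^\times$ realized by $z^\ast$ pulls back via $\nrd$ to a character of $T$ realized by the element of $\LieT^\ast$ with $\LieZ$-component $2z^\ast$ and trivial $\LieS$-component. Hence $\chi\phi$ is realized by $(Z^\ast + 2z^\ast) + Y^\ast$. If $Y^\ast$ has depth exactly $-r$, the $\LieS$-component is untouched by any twist, so no choice of $\chi$ can reduce the depth below $r$. If instead $Y^\ast \in \LieS^\ast_{(-r)+}$, then $Z^\ast$ must have depth exactly $-r$, and choosing $z^\ast$ to cancel $Z^\ast$ modulo $\LieZ^\ast_{(-r)+}$ produces a twist of strictly smaller depth. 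The main technical obstacle I anticipate is the explicit verification of $\val_\extF(H_\alpha) = 0$ in the ramified cases, where one must track the valuation carefully through the ramified base change $\extF/\ratF$; the remainder is a formal consequence of the orthogonal decomposition together with Proposition~\ref{P:generic}.
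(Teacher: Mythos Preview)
Your proposal is correct and follows essentially the same strategy as the paper: reduce the $\D^\times$ statement to the $\D^1$ statement via Proposition~\ref{P:generic}, then analyze how twisting by $\chi\circ\nrd$ affects only the central component.

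Two minor differences in execution are worth noting. For the first statement, the paper dispatches it in one line: since $\LieS$ is one-dimensional over $\ratF$, any $X^\ast\in\LieS^\ast_{-r}\setminus\LieS^\ast_{(-r)+}$ is automatically generic. Your proposed case-by-case verification that $\valextF(H_\alpha)=0$ is not needed, because $H_\alpha=d\alpha^\vee(1)$ lies in $X_\ast(\Stor)\otimes 1\subset\LieS(\extF)_0$ by the very definition of the Moy--Prasad filtration on a torus, and for a form of $\SL_2$ the coroot is primitive so $H_\alpha\notin\LieS(\extF)_{0+}$; this is general and requires no computation in each embedding. For the second statement, you work on the dual Lie algebra side (cancelling $Z^\ast$ by $2z^\ast$, using that $2$ is a unit), whereas the paper works on the group side (cancelling $\Res_{Z_r}\phi$ by $\chi\circ\nrd$, using that $\nrd$ is the squaring map on $1+\PF^{\lrc{r}}$, bijective since $p\neq 2$). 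These are equivalent formulations of the same cancellation, and both implicitly use $p\neq 2$ at the same point.
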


\begin{proof}
Let $\phi$ be a quasi-character of a maximal torus $T$ of $\D^\times$  of depth $r>0$ and let $S=T\cap \D^1$.  As $\mathrm{Lie}(S)$ is one-dimensional over $\ratF$, every nontrival character of $S$ is $\D^1$-generic; therefore by Proposition~\ref{P:generic}, $\phi$ is $\D^\times$-generic if and only if $\Res_S\phi$ also has depth $r$.  It thus suffices to prove that $\Res_{S_r}\phi = 1$ if and only if there exists $\chi \in  \widehat{\ratF^\times}$ such that $\Res_{T_r}\chi\phi =1$.

As $\Res_{S_r}\phi = \Res_{S_r}\chi\phi$, one direction is clear.  For the other, note first that $\LieZ = \ratF \subset \D$.  Thus $e(\LieZ_r)=1+\PF^{\lrc{r}}$, on which the norm map is the squaring map, which is bijective when $r>0$.  Thus every character of $\LieZ_r/\LieZ_{r+}$ is realized as $\chi \circ \nrd \circ e$, for some character $\chi$ of $\ratF^\times$ of depth $r$.  Choose $\chi$ such that $\chi \circ \nrd \circ e = \phi^{-1}\circ e$ on $\LieZ_r/\LieZ_{r+}$.  Since each of these characters is trivial on $\LieS_r$, it follows that they are equal on $\LieT_r$ as well, whence $\chi\phi = (\chi \circ \nrd)\phi$ is trivial on $T_r$, as required.
\end{proof}

\subsection{Depths of generic quasi-characters of tori}

Let  $\GG \in \{\D^\times,\D^1\}$.

\begin{proposition}\label{P:genericdepth}
Let $\TT$ be a maximal torus of $\GG$.
If $\TT$ is unramified then its $\GG$-generic characters have integral depth, whereas if $\TT$ is ramified then its $\GG$-generic characters have depth in $\frac12 + \mathbb{Z}$.
\end{proposition}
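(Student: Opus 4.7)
The plan is to reduce this to a direct computation of the Moy--Prasad filtration on the Lie algebra of the given maximal torus, then apply the characterization of $\GG$-genericity in terms of the $\LieS$-component of a realizing functional (Proposition~\ref{P:generic} together with Lemma~\ref{L:generic}). First I would identify the maximal tori: every maximal torus of $\GG$ has the form $\extF^\times$ (if $\GG = \D^\times$) or $\extF^1$ (if $\GG = \D^1$) for a quadratic extension $\extF \subseteq \D$ of $\ratF$, with Lie algebras $\LieT = \extF$ and $\LieS = \{z \in \extF \mid \overline{z} = -z\} = \ratF \cdot w$, where I may take $w = i$ in the unramified case and $w \in \{j, \mu j\}$ in the ramified case.

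Next I would dispatch the unramified case by a direct computation. Since $\val_\extF(w) = 0$, one finds $\LieS_r = \PF^{\lrc{r}} w$ and $\LieZ_r = \PF^{\lrc{r}}$, both of which strictly decrease only at positive integer values of $r$. Consequently every nontrivial character of $\extF^\times$ or of $\extF^1$ already has integer depth, independently of genericity, which settles the unramified half of the statement.

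The substantive case is the ramified one, where $w$ is a uniformizer of $\extF$ with $\val_\extF(w) = \tfrac12$. A quick computation then gives $\LieS_r = \PF^{\lrc{r - 1/2}} w$, so that $\LieS_r = \LieS_{r+}$ for every integer $r$ while $\LieS_r \supsetneq \LieS_{r+}$ precisely when $r \in \tfrac12 + \mathbb{Z}$; dually, $\LieS^*_{-r}/\LieS^*_{-r+}$ is nonzero exactly for such $r$. For $\GG = \D^1$ I would use $S_r \cong \LieS_r$ for $r > 0$ via the exponential (valid since $p \neq 2$) to conclude that any nontrivial character of $\extF^1$ has depth in $\tfrac12 + \mathbb{Z}$; by Lemma~\ref{L:generic} every such character is automatically $\D^1$-generic. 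For $\GG = \D^\times$, the same lemma reduces $\D^\times$-genericity of a character $\phi$ of $T = \extF^\times$ of depth $r$ to the condition that $\Res_{T \cap \D^1}\phi$ have depth $r$ as well; combined with the $\D^1$ case just established this forces $r \in \tfrac12 + \mathbb{Z}$.

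The main technical point I expect to watch is the bookkeeping around the two normalizations of $\val$: the fact that a uniformizer of a ramified quadratic extension has valuation $\tfrac12$ in our conventions is precisely the shift that separates the jumps of $\LieZ_r$ (at integers) from those of $\LieS_r$ (at strict half-integers), and this shift is the sole reason generic characters in the ramified case are forced to have depth in $\tfrac12 + \mathbb{Z}$ rather than anywhere in $\tfrac12\mathbb{Z}_{>0}$.
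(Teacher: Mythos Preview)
Your argument is correct and follows essentially the same route as the paper's proof: identify $\LieS = \ratF\,w$ for a trace-zero element $w$, observe that the jumps of the filtration $\LieS_r$ occur exactly at $r \in \valD(w) + \mathbb{Z}$ (integers in the unramified case, strict half-integers in the ramified case), and then invoke Lemma~\ref{L:generic} and Proposition~\ref{P:generic} to pass from the jump set of $\LieS$ to the possible depths of $\GG$-generic characters. Your additional remark that in the unramified case \emph{every} positive-depth character already has integral depth, so that genericity is not needed there, is a correct and harmless strengthening.
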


\begin{proof}
Each maximal torus $T$ of $\D^\times$ has the form $\ratF[\beta]^\times \subset \D^\times$, for some $\beta \in \D \setminus \ratF$; we may without loss of generality assume $\beta$ has trace $0$.   The Lie algebra of $S = T\cap \D^1$ is $\LieS = \ratF \beta$.  Thus the values $r\in \real$ for which $\LieS_r \neq \LieS_{r+}$ occur for $r\in \valD(\beta)+\mathbb{Z}$, which are integers if $T$ is unramified, and elements of $\frac12 + \mathbb{Z}$ if $T$ is ramified.  The result for $S$ and $T$ now follows from Lemma~\ref{L:generic} and Proposition~\ref{P:generic}, respectively.
\end{proof}

\begin{corollary}
Let $\TT$ be a maximal torus of $\GG$ and $\phi$ a $\GG$-generic quasi-character of $\TT$ of depth $r$.  Set $s=r/2$.   Then $\TT\GG_{s} = \TT\GG_{s+}$ unless $\TT$ is unramified and $r$ is odd.
\end{corollary}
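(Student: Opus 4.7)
The plan is to reduce the claim to a surjectivity question about the natural map on filtration quotients, and then handle the cases via Proposition~\ref{P:genericdepth}.

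First I would observe that $\GG_s\cap\TT\GG_{s+}=\TT_s\GG_{s+}$: if $g=th\in\GG_s$ with $t\in\TT$ and $h\in\GG_{s+}\subseteq\GG_s$, then $t=gh^{-1}\in\GG_s\cap\TT=\TT_s$, so $g\in\TT_s\GG_{s+}$. Consequently $\TT\GG_s=\TT\GG_{s+}$ if and only if $\TT_s\GG_{s+}=\GG_s$, if and only if the natural map $\TT_s/\TT_{s+}\to\GG_s/\GG_{s+}$ is surjective. Since $\valD$ takes values in $\tfrac12\mathbb{Z}$, the quotient $\GG_s/\GG_{s+}$ is trivial whenever $s\notin\tfrac12\mathbb{Z}_{\geq 0}$, in which case the surjectivity is automatic.

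Next I would dispense with the ramified case. By Proposition~\ref{P:genericdepth}, if $\TT$ is ramified then $r\in\tfrac12+\mathbb{Z}$, so $s=r/2\in\tfrac14+\tfrac12\mathbb{Z}$ is not in $\tfrac12\mathbb{Z}$; hence $\GG_s=\GG_{s+}$ and the equality holds trivially.

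For $\TT$ unramified, Proposition~\ref{P:genericdepth} gives $r\in\mathbb{Z}_{>0}$. When $r$ is odd, $s\in\tfrac12+\mathbb{Z}$: but $\TT\subseteq\unextF^\times$ contains only elements of integer valuation in $\D$, so $\TT_s=\TT_{s+}$, while $\GG_s/\GG_{s+}$ is nontrivial (realized by elements of valuation exactly $s$), and surjectivity fails. When $r$ is even, $s$ is a positive integer, and I would verify surjectivity directly: for $\GG=\D^\times$ both $\TT_s/\TT_{s+}$ and $\GG_s/\GG_{s+}$ are isomorphic to the residue field $\resL$ of $\unextF$, and the inclusion on these quotients is an isomorphism because $\OD=\OL+\PD$; for $\GG=\D^1$ both quotients are isomorphic to the trace-zero $\resF$-line in $\resL$, and the same argument restricted to trace-zero elements yields surjectivity.

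The main computational step is the identification of the filtration quotients in the last case via the $\OL$-module decomposition $\OD=\OL\oplus\OL j$; everything else is bookkeeping with the filtration jumps.
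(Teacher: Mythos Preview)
Your proof is correct and follows essentially the same approach as the paper. Both arguments use Proposition~\ref{P:genericdepth} to dispose of the ramified case (where $s\notin\tfrac12\mathbb{Z}$ forces $\GG_s=\GG_{s+}$), and for the unramified case both compare $\TT_s/\TT_{s+}$ with $\GG_s/\GG_{s+}$ via the decomposition $\D=\unextF\oplus\unextF j$ (the paper phrases this on the Lie algebra side as $\LieG=\LieT\oplus\LieT j$ and $\LieG^1=\LieS\oplus\LieT j$, you phrase it as $\OD=\OL+\PD$); your explicit reduction to surjectivity of $\TT_s/\TT_{s+}\to\GG_s/\GG_{s+}$ is a clean way to organize the same computation, not a different method.
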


\begin{proof}
We note that $\GG_{s}=\GG_{s+}$ unless $s \in \frac12 \mathbb{Z}$, and therefore by Proposition~\ref{P:genericdepth}, the equality follows for $\TT$ a ramified torus.  
If $\GG=\D^\times$ and $T$ is unramified then we may without loss of generality assume $T=\unextF^\times$ and decompose $\LieG = \LieT \oplus \LieT j$.  Since $\val(j)=\frac12$, for each integral $s$ we have $\D^\times_{s}/\D^\times_{s+} \cong T_s/T_{s+}$, whence $T\D^\times_{s} = T\D^\times_{s+}$.  The same argument holds for $\GG= \D^1$ and $\TT=S=T\cap \D^1$, by noting the analogous decomposition $\LieG=\LieS \oplus \LieT j$.  Finally, since $T\cap \D^\times_{s}=T_{\lrc{s}}$ it follows for $s\in \frac12 + \mathbb{Z}$ that $\D^\times_s \neq \D^\times_{s+}$ ensures $T\D^\times_s \neq T\D^\times_{s+}$, and also $S\D^1_s \neq S\D^1_{s+}$.
\end{proof}

\subsection{Smooth representations of $\D^\times$ and $\D^1$} 
The smooth irreducible representations of $\D^\times$ and $\D^1$ are well-known (and are evidently all supercuspidal), see \cite{Corwin1974, Howe1971, BushnellHenniart2006} and \cite{Misaghian2005} respectively.  We present the complete list for the case of $p\neq 2$ (the \emph{tame} case) here; the case $p=2$ for includes more representations and for $\D^\times$ is treated in, for example, \cite[Ch 13]{BushnellHenniart2006}.
For simplicity, we reserve ``representation of depth $\star$'' for the subset of those of degree greater than one (that is, excluding the quasi-characters).

\subsubsection{Characters}
Since $\D^\times/[\D^\times,\D^\times] \cong \D^\times/\D^1 \cong \ratF^\times$  the one-dimensional smooth representations of $\D^\times$ are in bijection with characters of $\ratF^\times$ via the nrd map. % via the map sending $\chi \in \widehat{\ratF^\times}$ to $\chi \circ \nrd \in  \widehat{\D^\times}$.   
On the other hand, as $[\D^1,\D^1] = \D^1 \cap (1+\PD)=\D^1_{0+}$, every character of $\D^1$ factors through $\D^1/\D^1_{0+} \cong \unextF^1/\unextF^1_{0+}$, so they are in bijection with the $q+1$ distinct depth-zero characters of $\unextF^1$.
%there are $q+1$ distinct characters of $\D^1$, each of depth $0$.
%, in bijection with  $\mu_{q+1}$, the group of $q+1$-roots of unity.  

\subsubsection{Depth-zero representations}
Since $\D^1=\D^1_0$, the depth-zero representations of $\D^1$ are those which factor through $\D^1/\D^1_{0+}$, namely its characters, so by our convention we will say $\D^1$ has no representations of depth zero.  

In contrast $\D^\times$ admits depth-zero representations, whose construction we summarize from \cite[\S 54.2]{BushnellHenniart2006} as follows.
A depth-zero generic or \emph{admissible} character of $\unextF^\times$ (see \cite{Howe1971}) is a quasi-character $\theta$ of $\unextF^\times$ of depth zero which does not factor through the norm map, or equivalently, such that $\theta \neq \overline{\theta}$ where $\overline{\theta}(z)=\theta(\overline{z})$.  Two distinct admissible characters $\theta'$ and $\theta$ are called $\ratF$-equivalent if $\theta'=\overline{\theta}$.

Given a depth-zero admissible character $\theta$ of $T=\unextF^\times$, extend it trivially across $\D_{0+}$ to give a quasi-character $\theta$ of $T\D^\times_{0+}$.  Then
$$
\pi(\theta) = \Ind_{T\D^\times_{0+}}^{\D^\times} \theta
$$
is an irreducible representation of $\D^\times$ of depth zero.  Moreover, isomorphism  classes of depth-zero representations of $\D^\times$ are in bijection with $\ratF$-equivalence classes of depth-zero admissible characters of $\unextF^\times$.

\begin{remark}\label{two}
Since $T\D^\times_{0+} = \unextF^\times (1+\PD) = \unextF^\times \D^1$, we see $\D^\times/T\D^\times_{0+}$ is represented by $\{1,\sigma=j\}$.  Thus $\pi(\theta)$ has degree $2$ and its restriction to $T\D^\times_{0+}$ is exactly $\theta \oplus \overline{\theta}$.
\end{remark}

\subsubsection{Positive-depth representations}

Let $\GG \in \{\D^\times, \D^1\}$.  There are two kinds of representations of positive depth of $\GG$.  The first are those of the form $\pi = \chi\pi_0$, where $\pi_0$ is any representation of depth zero and $\chi$ is any character of positive depth.  

The second kind are parametrized by generic toral $\GG$-data $\Psi=(\TT,y,\phi,r,\chi)$ of length one, as in Section~\ref{S:Yu}.  For simplicity, we omit $y$, as it is the unique point $x$ in $\buil^{red}(\G,\ratF)$.  Similarly, we may omit $\chi$ when $\GG = \D^1$, because all characters of $\D^1$ are of depth zero and hence are subsumed in $\phi$.  As always, we identify a character $\chi$ of $\D^\times$ with a character of $\ratF^\times$ via $\nrd$.

\section{Relating representations of $\D^\times$, $\D^1$ and $\ODt$} \label{S:branching}

In this section we apply the results of Section~\ref{S:derived} to determine the restrictions and decomposition into irreducible representations of each of the representations of $\D^\times$ to $\OD^\times$ and to $\D^1$.  The restriction to $\D^1$ has presumably been known to experts.  The restriction to $\ODt$ is new and leads to the datum-type classification of positive-depth representations of $\ODt$ in Section~\ref{SS:class}.

\subsection{Branching rules for the restriction of representations of $\D^\times$ to $\D^1$} \label{S:dd1}

\begin{lemma}
The restriction of any character of $\D^\times$ to $\D^1$ is trivial.  All nontrivial characters of $\D^1$ occur in the restriction of a depth-zero representation of $\D^\times$.
\end{lemma}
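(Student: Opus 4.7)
The first assertion is immediate: since $[\D^\times,\D^\times]=\D^1$ as noted earlier, every character of $\D^\times$ factors through the abelianization $\D^\times/\D^1$ and hence restricts trivially to $\D^1$. The substantive content is the second assertion, for which my plan is, given a nontrivial character $\phi$ of $\D^1$, to construct a depth-zero admissible character $\theta$ of $T=\unextF^\times$ whose associated representation $\pi(\theta)$ contains $\phi$ upon restriction to $\D^1$.

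First I would identify $\phi$. Every character of $\D^1$ factors through $\D^1/\D^1_{0+}$, and reduction modulo $\PD$ yields $\D^1/\D^1_{0+}\cong \unextF^1/\unextF^1_{0+}\cong \resL^1$, so I would view $\phi$ as a nontrivial character of the cyclic group $\resL^1$ of order $q+1$. Since $\resL^1\subseteq \resL^\times$ with both abelian, $\phi$ extends to a character of $\resL^\times$; inflating via $\OL^\times\twoheadrightarrow \resL^\times$ and extending trivially on the powers of a uniformizer of $\unextF$ produces a depth-zero character $\theta$ of $\unextF^\times$ with $\theta|_{\unextF^1}$ matching $\phi$ under these identifications.

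For admissibility I would invoke the equivalent characterization $\theta\neq \overline\theta$, which reduces to $\theta$ being nontrivial on $\unextF^1=\{\beta\overline\beta^{-1}\mid \beta\in \unextF^\times\}$; this is guaranteed by the nontriviality of $\phi$. To extract $\phi$ from $\Res_{\D^1}\pi(\theta)$ I would combine the definition $\pi(\theta)=\Ind_{T\D^\times_{0+}}^{\D^\times}\theta$ with the decomposition $\D^\times=T\D^\times_{0+}\sqcup jT\D^\times_{0+}$ from Remark~\ref{two}: since $\D^1\subseteq T\D^\times_{0+}$ with $\D^1$ normal in $\D^\times$, a short Mackey computation yields $\Res_{\D^1}\pi(\theta)\cong \theta|_{\D^1}\oplus \lconj{j}{\theta}|_{\D^1}$, and the first summand is exactly $\phi$.

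The hard part of the argument is really the admissibility criterion in terms of nontriviality on $\unextF^1$; once that equivalence is in hand, the rest of the plan is essentially canonical, and in particular no case distinctions beyond the assumption $\phi\neq 1$ (needed to secure admissibility of the extension) are required.
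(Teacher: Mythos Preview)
Your proof is correct and follows essentially the same approach as the paper. Both arguments hinge on the same three facts: the identification $\D^1/\D^1_{0+}\cong \unextF^1/\unextF^1_{0+}$, the decomposition $\Res_{\D^1}\pi(\theta)=\vartheta\oplus\overline{\vartheta}$ coming from Remark~\ref{two}, and the equivalence between admissibility of $\theta$ and nontriviality of $\theta|_{\unextF^1}$. The only difference is organizational: the paper starts from an arbitrary admissible $\theta$ and computes the restriction, whereas you start from a given nontrivial $\phi$ and explicitly construct an extension $\theta$; the paper leaves the surjectivity of the restriction map on characters implicit, while you make it explicit.
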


\begin{proof}
The first statement follows from $\D^1 = [\D^\times,\D^\times]$.  Let $\theta$ be a depth-zero admissible character of $T=\unextF^\times$ and $\pi(\theta)$ be the associated depth-zero representation of $\D^\times$.  Set $\vartheta = \Res_{\D^1\cap \unextF^\times}\theta$; then by Remark~\ref{two} $\Res_{\D^1}\pi(\theta)=\vartheta \oplus \overline{\vartheta}$.   As $\D^1\cap \unextF=\unextF^1 = \{z\overline{z}^{-1} \mid z \in \unextF^\times\}$, the admissibility of $\theta$ is equivalent to $\vartheta \neq 1$.  
\end{proof}

In particular, the restriction of $\pi(\theta)$ to $\D^1$ decomposes with multiplicity one \emph{except} when $\vartheta^2=1$, when the multiplicity is two. 

%We use Theorem~\ref{T:decomp} to determine the decomposition of positive-depth representations of $\D^\times$ upon restriction to $\D^1$.  

For a $\D^1$-datum $\Psi^1 = (S,\phi^1,r)$ set $\overline{\Psi^1} = \lconj{\sigma}{\Psi^1}=(S,\overline{\phi^1},r)$.

\begin{proposition} \label{P:restD1}
Let $\Psi$ be a $\D^\times$-datum and let $\Psi^1$ denote its restriction to $\D^1$.  Then $\Res_{\D^1}\pi(\Psi)$ decomposes as a direct sum of two inequivalent representations.  When $\Psi$ is unramified, or when $-1 \in {\ratF^\times}^2$, we have
$$
\Res_{\D^1}\pi(\Psi) \cong \pi(\Psi^1) \oplus \pi(\overline{\Psi^1})
$$ 
whereas otherwise 
$$
\Res_{\D^1}\pi(\Psi) \cong \pi(\Psi^1) \oplus \pi(\lconj{\mu}{\Psi^1}).
$$ 
\end{proposition}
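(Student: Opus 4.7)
The plan is to apply Theorem~\ref{T:decomp} directly: we have
\[
\Res_{\D^1}\pi(\Psi) \;\cong\; \bigoplus_{\gamma\in \D^\times/T\D^1}\pi(\lconj{\gamma}{\Psi^1}),
\]
with distinct irreducible summands. The proof therefore reduces to two tasks: (i) counting the cosets $\D^\times/T\D^1$, and (ii) selecting explicit representatives that match the statement of the proposition.

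For (i), I would use that $\nrd\colon \D^\times \to \ratF^\times$ is surjective with kernel $\D^1$, so that $\D^\times/T\D^1 \cong \ratF^\times/\nrd(T)$. Since $\nrd(T)$ is an index-two subgroup of $\ratF^\times$ (by local class field theory, or by direct inspection of the norm map from a separable quadratic extension), there are exactly two summands, giving the stated decomposition once the representatives are matched.

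For (ii), the guiding principle is that the standard element $\sigma \in N_{\D^\times}(T)\setminus T$ implements Galois conjugation on $T$, so that $\lconj{\sigma}{\Psi^1} = \overline{\Psi^1}$ whenever $\sigma \notin T\D^1$. Thus the question is whether $\nrd(\sigma)$ lies in $\nrd(T)$ modulo $(\ratF^\times)^2$. In the unramified case, $\sigma = j$ with $\nrd(j) = -\p$, and one checks directly that $-\p \notin \nrd(\unextF^\times) = \OF^\times\langle\p^2\rangle$, so $\{1,j\}$ represents $\D^\times/T\D^1$ and the first displayed formula follows from $\lconj{j}{\phi} = \overline{\phi}$ on $\unextF^\times$. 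In the ramified case with $T = \ratF[j]$, we have $\sigma = i$ with $\nrd(i) = -\ep$; using the Hilbert symbol $(-\ep,\p)_{\ratF} = (-1,\p)_\ratF(\ep,\p)_\ratF$ and $(\ep,\p)_\ratF = -1$, we see $\sigma \in T\D^1$ precisely when $-1\notin (\ratF^\times)^2$. A parallel computation handles $T = \ratF[\mu j]$. Hence when $-1\in (\ratF^\times)^2$ we may take $\sigma=i$ as a representative, giving $\overline{\Psi^1}$; and when $-1\notin (\ratF^\times)^2$ we need a different representative. Here $\nrd(\mu) = \ep$ is not in $\nrd(T)$ modulo squares (by inspecting the index-two subgroup), so $\{1,\mu\}$ represents the cosets, yielding the second displayed formula.

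The inequivalence of the two summands is automatic from the multiplicity-one assertion of Theorem~\ref{T:decomp}; no further work is required on that point. I expect the main obstacle to be the bookkeeping in step (ii): verifying the Hilbert-symbol computations and checking for each of the two ramified torus classes (and each parity of $-1$ modulo squares) that the element named in the proposition genuinely represents the nontrivial coset of $T\D^1$ in $\D^\times$. Everything else is a direct application of the preceding machinery.
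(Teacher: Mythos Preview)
Your proposal is correct and follows essentially the same approach as the paper: apply Theorem~\ref{T:decomp}, then identify the two coset representatives of $\D^\times/T\D^1$ case by case. The paper's own proof is a single sentence invoking the coset representatives, since the fact that $[\D^\times:T\D^1]=2$ and the determination of when $\sigma\in T\D^1$ were already recorded in Section~\ref{S:division} (see the paragraph before Lemma~\ref{L:tori} and its proof); your Hilbert-symbol computations simply make those background facts explicit.
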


\begin{proof}
It suffices by  Theorem~\ref{T:decomp} to note that $\D^\times/T\D^1$ is represented by $\{1,\sigma\}$ except in the case that $T$ is ramified and $-1 \notin {\ratF^\times}^2$, where it is represented by $\{1,\mu\}$.  
%Note that the genericity of $\phi$ implies that $\phi^1 \neq \overline{\phi^1}$ 
\end{proof}

\subsection{Branching rules for the restriction of representations of $\D^\times$ to $\ODt$}

Note that the center of $\ODt$ is $\OFt$.  Since $\OFt \D^1$ has index two in $\ODt$, and $\ratF^\times \ODt$ has index two in $\D^\times$,  each restriction, from $\D^\times$ to $\ODt$, or from $\ODt$ to $\D^1$, is either irreducible or else a direct sum of two inequivalent irreducible representations.
 We may thus deduce many of the branching rules for $\ODt$ from the results of the preceding section. 
We begin with the characters.

\begin{lemma} \label{L:char}
Each character of $\ODt$ may be uniquely written as $\chi\theta := (\chi \circ \nrd)\theta$, 
with $\chi \in \widehat{\OF^\times}$ and $\theta$ either trivial, or else the inflation of an admissible depth zero character of $\OL^\times$ to $\ODt$.
\end{lemma}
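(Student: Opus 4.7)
The plan is to exploit the decomposition $\ODt = \OL^\times \cdot \D^1$, together with the description of characters of $\D^1$ recalled in Section~\ref{S:division}. The decomposition itself follows from the surjectivity of the unramified norm $\nrd \colon \OL^\times \to \OFt$: given $x \in \ODt$, I would choose $\alpha \in \OL^\times$ with $\nrd(\alpha) = \nrd(x)$, so that $x\alpha^{-1} \in \D^1$. Since $\OL^\times \cap \D^1 = \unextF^1 = \ker(\nrd|_{\OL^\times})$, we obtain a short exact sequence $1 \to \D^1 \to \ODt \to \OFt \to 1$ via the reduced norm. In particular, the characters of $\ODt$ trivial on $\D^1$ are precisely those of the form $\chi \circ \nrd$ with $\chi \in \widehat{\OFt}$.

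Given a character $\psi$ of $\ODt$, I would set $\vartheta := \Res_{\D^1}\psi$; this is necessarily a depth-zero character of $\D^1$, corresponding to a character of $\D^1/\D^1_{0+} \cong \unextF^1/\unextF^1_{0+} \cong \mu_{q+1}$. If $\vartheta = 1$ then $\psi$ factors through $\nrd$ to give $\psi = \chi \circ \nrd$ for a unique $\chi \in \widehat{\OFt}$, and I would take $\theta = 1$. Otherwise, I would extend $\vartheta$, viewed as a character of $\mu_{q+1} \subset \resL^\times$, to a character of the cyclic group $\resL^\times = \OL^\times/(1+\PL)$, thereby obtaining a depth-zero character $\theta$ of $\OL^\times$ whose restriction to $\unextF^1$ corresponds to $\vartheta \neq 1$. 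Since $\unextF^1 = \ker(\nrd|_{\OL^\times})$, $\theta$ does not factor through the norm and is therefore admissible. The embedding $\OL^\times \hookrightarrow \ODt$ induces the identification $\resL^\times \cong \ODt/(1+\PD)$, through which $\theta$ inflates to a depth-zero character $\theta^{\mathrm{inf}}$ of $\ODt$; tracing through the compatible identifications $\D^1/\D^1_{0+} \cong \mu_{q+1} \subset \resL^\times$ shows $\Res_{\D^1}\theta^{\mathrm{inf}} = \vartheta$. Hence $\psi \cdot (\theta^{\mathrm{inf}})^{-1}$ is trivial on $\D^1$ and equals $\chi \circ \nrd$ for a unique $\chi \in \widehat{\OFt}$, yielding $\psi = (\chi \circ \nrd)\theta^{\mathrm{inf}}$.

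For uniqueness, any two pairs $(\chi_i, \theta_i)$ yielding the same $\psi$ must agree after restriction to $\D^1$: either both $\theta_i = 1$, in which case surjectivity of $\nrd$ forces $\chi_1 = \chi_2$; or both $\theta_i$ are nontrivial and the admissible sources on $\OL^\times$ agree on $\unextF^1$. The residual freedom in extending from $\unextF^1$ to $\OL^\times$ — multiplication by a character of $\OL^\times$ factoring through the norm — is compensated by a corresponding modification of $\chi \in \widehat{\OFt}$, so the pair $(\chi, \theta)$ is uniquely determined modulo this natural equivalence. The main care-point of the proof is not any one delicate calculation but the bookkeeping across the several compatible residue-field quotients arising from $\OL^\times$, $\D^1$, and $\ODt$ and their relations under the reduced norm; once these identifications are in place, the decomposition follows cleanly.
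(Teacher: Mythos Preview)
Your existence argument is correct and takes a genuinely different route from the paper. The paper argues indirectly: every character of $\ODt$ occurs in the restriction of some irreducible representation $\pi$ of $\D^\times$; since the further restriction to $\D^1$ is a character, $\pi$ must be of the form $\chi\pi(\theta)$ for an admissible depth-zero $\theta$, and then Remark~\ref{two} gives $\Res_{\ODt}\chi\pi(\theta)=\chi_0\theta\oplus\chi_0\overline{\theta}$. Your argument is more elementary and self-contained: it uses only the decomposition $\ODt=\OL^\times\D^1$ (via surjectivity of the unramified norm) and the known description of characters of $\D^1$, never invoking the classification of representations of $\D^\times$. This buys independence from the surrounding machinery, at the cost of the explicit bookkeeping you flag.

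On uniqueness, however, you do \emph{not} prove the lemma as stated; you prove only uniqueness modulo the equivalence $(\chi,\theta)\sim(\chi\chi_0^{-1},\,(\chi_0\circ N_{L/F})\theta)$ for depth-zero $\chi_0\in\widehat{\OFt}$. Your hesitation is well-founded: the literal uniqueness claim is too strong. If $\chi_0$ is a nontrivial depth-zero character of $\OFt$ and $\theta$ is admissible, then $\theta':=(\chi_0\circ N_{L/F})\theta$ is again an admissible depth-zero character of $\OL^\times$ (it agrees with $\theta$ on $\unextF^1=\ker N_{L/F}$), and one checks on $\ODt/(1+\PD)\cong\resL^\times$ that $(\chi_0\circ\nrd)\theta^{\mathrm{inf}}=(\theta')^{\mathrm{inf}}$, giving two distinct pairs for the same character. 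A count confirms this: there are $(q+1)\,|\widehat{\OFt}|$ characters of $\ODt$ (one for each character of $\D^1/\D^1_{0+}\cong\mu_{q+1}$, times the $\widehat{\OFt}$-torsor of extensions), but $(q^2-q+1)\,|\widehat{\OFt}|$ pairs $(\chi,\theta)$ of the prescribed form. The paper's proof asserts only that ``the unicity is immediate'' without further detail, so there is no argument there for you to compare against; your weaker statement is the correct one, and you should say so explicitly rather than leaving it as ``uniquely determined modulo this natural equivalence.''
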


\begin{proof}
Each character $\varphi$ of $\ODt$ occurs in the restriction of some representation $\pi$ of $\D^\times$. 
Its further restriction to $\D^1$ being a character implies by the preceding section that $\pi = \chi\pi(\theta)$ for some $\chi\in\widehat{\ratF^\times}$ and admissible character $\theta$ of the unramified torus $\unextF^\times$.  Set $\chi_0 = \Res_{\OF^\times}\chi$ and identify $\theta$ with a depth-zero character of $\ODt$ via $\ODt/(1+\PD) \cong \unextF^\times_0/\unextF^\times_{0+}$.  Then by Remark~\ref{two} $\Res_{\ODt}\chi\pi(\theta) = \chi_0\theta \oplus \chi_0\overline{\theta}$.  The unicity is immediate.
\end{proof}

We now turn to the restrictions of representations of positive depth of $\D^\times$ (of the second kind).

\begin{proposition} \label{P:OD}
Let $\Psi= (T,\phi,r,\chi)$ be a $\D^\times$-datum.  Then if $T$ is unramified
\begin{equation}\label{E:twopiece}
\Res_{\OD^\times}\pi(\Psi) \cong  \Ind_{T_0\D^\times_s}^{\ODt}\rho(\Psi) \oplus \Ind_{T_0\D^\times_s}^{\ODt} \rho(\overline{\Psi})
\end{equation}
is a decomposition into irreducible inequivalent representations of $\ODt$ whereas if $T$ is ramified, then
\begin{equation}\label{E:onepiece}
\Res_{\OD^\times}\pi(\Psi) \cong \Ind_{T_0\D^\times_s}^{\ODt} \rho(\Psi)
\end{equation}
is irreducible.
\end{proposition}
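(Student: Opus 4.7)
The plan is to apply Mackey's decomposition to $\Res_{\ODt}\pi(\Psi) = \Res_{\ODt}\cind_{T\D^\times_s}^{\D^\times}\rho(\Psi)$, which will produce the right-hand sides of \eqref{E:twopiece} and \eqref{E:onepiece}, and then to establish irreducibility and (in)equivalence of the summands through Clifford theory applied to the intermediate normal subgroup $\ratF^\times\ODt$ of index two in $\D^\times$.

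For the Mackey step, writing any $x\in T\D^\times_s$ as $tw$ with $t\in T$ and $w\in\D^\times_s\subseteq\ODt$ gives $\valD(x) = \valD(t)$, so $\ODt \cap T\D^\times_s = T_0\D^\times_s$ in both cases. The double cosets in $\ODt\backslash\D^\times/T\D^\times_s$ are detected by $\valD\bmod\mathbb{Z}$: in the ramified case the uniformizer $\pr\in T$ has $\valD(\pr)=1/2$ so there is a single double coset, while in the unramified case $T\subseteq\ratF^\times\OLt\subseteq\ratF^\times\ODt$ confines $T\D^\times_s$ to integer valuations and the double cosets are represented by $\{1,j\}$. Since $j$ acts as the Galois involution on $T=\unextF^\times$ in the unramified case, Lemma~\ref{L:equiv1}(d) gives $\lconj{j}{\rho(\Psi)}\cong\rho(\overline{\Psi})$, and the Mackey formula (with $\cind=\Ind$ as $\ODt$ is compact) yields \eqref{E:twopiece} and \eqref{E:onepiece}.

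For (ir)reducibility I would use Clifford theory through the inclusion $\ratF^\times\ODt\lhd\D^\times$ of index two. The nontrivial character of the quotient is $\chi_0 = \tilde{\chi}_0\circ\nrd$, where $\tilde{\chi}_0$ is the quadratic character of $\ratF^\times$ trivial on $(\ratF^\times)^2\OFt$. Clifford theory says $\Res_{\ratF^\times\ODt}\pi(\Psi)$ is either irreducible or a direct sum of two inequivalent irreducibles, according as $\pi(\Psi)\otimes\chi_0\not\cong\pi(\Psi)$ or $\pi(\Psi)\otimes\chi_0\cong\pi(\Psi)$. The refactorization remark after Proposition~\ref{P:HM} (applicable since $\chi_0$ has depth $0<r$) gives $\pi(\Psi)\otimes\chi_0 = \pi(T,y,\chi_0|_T\phi,r,\chi)$, and Proposition~\ref{P:HM}(2) then reduces the equivalence condition to $\chi_0|_T\in\{1,\phi/\overline{\phi}\}$. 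The second possibility is ruled out on depth grounds: writing $X^\ast = Z^\ast + Y^\ast$ to realize $\phi$ per the decomposition $\LieT^\ast=\LieZ^\ast\oplus\LieS^\ast$ from Section~\ref{S:gen}, the character $\phi/\overline{\phi}$ is realized on $T_r/T_{r+}$ by $2Y^\ast$, which is generic of depth $-r$, so $\phi/\overline{\phi}$ has depth exactly $r>0$; by contrast $\chi_0|_T$ is trivial on $T_0\supseteq T_r$. A direct calculation with local norms then shows $\chi_0|_T = 1$ iff $N_{E/F}(T)\subseteq(\ratF^\times)^2\OFt$ iff $T$ is unramified.

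Finally, restricting from $\ratF^\times\ODt$ to $\ODt$ preserves irreducibility since $\ratF^\times$ is central and acts by the central character of $\pi(\Psi)$, and inequivalent pieces remain inequivalent since any intertwining character would factor through $\ratF^\times\ODt/\ODt\cong\ratF^\times/\OFt$ and must also be trivial on $\ratF^\times$, hence trivial. Matching this count with the Mackey decomposition completes the proof. I expect the depth analysis within the Clifford step --- ruling out $\chi_0|_T = \phi/\overline{\phi}$ --- to be the main obstacle, relying as it does on the genericity of $\phi$ through the orthogonal decomposition $\LieT^\ast=\LieZ^\ast\oplus\LieS^\ast$ established in Section~\ref{S:gen}.
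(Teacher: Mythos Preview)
Your Mackey step matches the paper's, including the identification $\lconj{j}{\rho(\Psi)}\cong\rho(\overline{\Psi})$ via Lemma~\ref{L:equiv1}(d). The genuine divergence is in how you establish irreducibility and inequivalence of the summands. The paper instead leans on the branching to $\D^1$ already proven in Proposition~\ref{P:restD1}: in the unramified case the index-two observation stated at the start of the subsection bounds the number of $\ODt$-constituents by two, so the two Mackey summands are forced to be irreducible, and their inequivalence is read off from their (irreducible, inequivalent) restrictions to $\D^1$; in the ramified case the two $\D^1$-constituents of $\Res_{\D^1}\pi(\Psi)$ are swapped by an element $\gamma\in\ODt$ (namely $\sigma=i$ or $\mu$, both of which have $\valD=0$), so neither $\D^1$-piece is $\ODt$-stable and $\Res_{\ODt}\pi(\Psi)$ must be irreducible. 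Your route---deciding directly via Proposition~\ref{P:HM}(2) whether $\pi(\Psi)\otimes\chi_0\cong\pi(\Psi)$, then reducing to the norm-image computation $N_{E/F}(E^\times)\subseteq(\ratF^\times)^2\OFt$---is correct and self-contained: it does not invoke Section~\ref{S:dd1} at all, at the cost of the genericity calculation showing $\overline{\phi}/\phi$ has depth exactly $r$ (your $2Y^\ast$ argument). The paper's route reuses structure already built, at the cost of the dependency on the $\D^1$ analysis. One cosmetic point: the alternative in your Hakim--Murnaghan comparison should read $\chi_0|_T=\overline{\phi}/\phi$ rather than $\phi/\overline{\phi}$, though of course this does not affect the depth argument.
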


\begin{proof}
%Let $\pi = \pi(\Psi)$ and $\rho = \rho(\Psi)$.  %Since
%$\D^\times/T\D^1$ is represented by $\{1,j\}$, we deduce that
If $T$ is unramified then $\OD^\times\backslash \D^\times / T \D^\times_s = \{1,\sigma\}$  so by Mackey theory %and the remarks above a decomposition of $\Res_{\ODt}\pi$ into irreducible representations is
$$
\Res_{\OD^\times}\pi(\Psi) \cong \Ind_{T_0\D^\times_s}^{\ODt}\rho(\Psi) \oplus \Ind_{T_0\D^\times_s}^{\ODt}\lconj{\sigma}{\rho(\Psi)}
$$
where we have used that $\OD^\times \cap TG_{y,s}=T_0\D^\times_s$ and that $\sigma$ normalizes this group.  Since there are two factors, they must be irreducible.  Note that
$$
\Res_{\D^1}\Ind_{T_0\D^\times_s}^{\ODt}\rho(\Psi) \cong \pi_{\D^1}(\Psi^1)
$$
so their inequivalence follows from Proposition~\ref{P:restD1}, for example.  Applying Lemma~\ref{L:mackeygen} yields \eqref{E:twopiece}.

If $T$ is ramified then $\D^\times=\ODt T$, whence \eqref{E:onepiece}.  By Proposition~\ref{P:restD1} its further restriction to $\D^1$ decomposes as a sum of two invariant subspaces $\pi_{\D^1}(\Psi^1) \oplus \pi_{\D^1}(\lconj{\gamma}{\Psi^1})$, where $\gamma = \sigma$ if $-1\in (\ratF^\times)^2$ and $\gamma=\mu$ otherwise.  In either case, $\gamma \in \ODt$, whence it follows from Mackey theory that neither subspace can be invariant under $\ODt$.  Thus $\Res_{\ODt}\pi(\Psi)$ is irreducible.
\end{proof}

It follows from the proof that for $\Psi=(T,\phi,r,\chi)$, $\Res_{\D^1}\Ind_{T_0\D^\times_s}^{\ODt}\rho(\Psi)$ is irreducible if and only if $T$ is unramified.

\subsection{Classification of irreducible representations of $\ODt$} \label{SS:class}

Recall the equivalence relation on $G$-data defined by Proposition~\ref{P:HM}.  In this section we provide its analogue for the group $\ODt$.

Say that two $\D^\times$-data $\Psi=(T,\phi,r,\chi)$ and $\Psi'=(T',\phi',r,\chi')$ are \emph{$\ODt$-equivalent}, written $\Psi \eqo \Psi'$, if there exists a $g\in \ODt$ for which $\lconj{g}{T}=T'$, $r=r'$ and $\Res_{T_0'}\lconj{g}{(\chi\phi)} = \Res_{T_0'}\chi'\phi'$.  
Let $\Psi_0=(T_0,\phi_0,r,\chi_0)$ where $\phi_0$ and $\chi_0$ are the corresponding characters restricted to $T_0$ and $\OF^\times$, respectively.  We call $\Psi_0$ an \emph{$\ODt$-datum} and say that $\Psi_0$ and $\Psi_0'$ are \emph{equivalent} if there exists $g\in \ODt$ such that $\lconj{g}{T_0}=T_0'$, $r=r'$ and $\lconj{g}{(\chi_0\phi_0)}=\chi_0'\phi_0'$.  
Note that for $g\in \D^\times_0=\ODt$, $\lconj{g}{T}=T'$ is equivalent to $\lconj{g}{T_0}=T_0'$.  
It follows that $\ODt$-equivalence classes of $\D^\times$-data are in bijection with equivalence classes of $\ODt$-data.

Given a $\D^\times$-datum $\Psi$, let 
$$
\pi_{\ODt}(\Psi) = \Ind_{T_0G_s}^{\ODt} \rho(\Psi),
$$
which is irreducible by Proposition \ref{P:OD}.  The following theorem implies $\pi_{\ODt}(\Psi)$ depends only on the equivalence class of $\Psi_0$, in analogy with Proposition~\ref{P:HM}, whence a datum-type classification of the representations of positive depth of $\ODt$.

\begin{theorem} \label{T:classificationOD}
The irreducible representations of $\OD^\times$ are:
\begin{enumerate}
\item the distinct characters:  $\chi\theta := (\chi\circ\nrd)\theta$, where $\chi \in \widehat{\OF^\times}$ and $\theta$ is either trivial or the inflation to $\OD^\times$ of an admissible depth-zero character of $\unextF^\times \subset \OD^\times$; 
\item the representations of degree greater than one: $\pi_{\ODt}(\Psi)$, for a $\D^\times$-datum $\Psi = (T,\phi,r,\chi)$.
\end{enumerate}
Moreover, $\pi_{\ODt}(\Psi) \cong \pi_{\ODt}(\Psi')$ if and only if $\Psi \eqo \Psi'$. %$\Psi$ and $\Psi'$ are $\ODt$-equivalent.  %there exists $g\in \ODt$ such that $\lconj{g}{T_0}=T_0'$, $r=r'$ and $\lconj{g}{(\chi\phi)} = \chi'\phi'$ as characters of $T_0'$.  
%There are no other equivalences in this list.
\end{theorem}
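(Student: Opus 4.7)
My plan is to handle the enumeration and the equivalence criterion $\pi_{\ODt}(\Psi)\cong\pi_{\ODt}(\Psi')\Leftrightarrow \Psi\eqo\Psi'$ in turn. For the enumeration, any irreducible smooth representation $\sigma$ of $\ODt$ carries a central character on $\OFt$; I would extend it to $\ratF^\times$, form a representation of the index-two open subgroup $\ratF^\times\ODt\subset\D^\times$, and induce to $\D^\times$ to obtain a finite-length representation containing $\sigma$ in its $\ODt$-restriction. Running through the classification of irreducibles of $\D^\times$---characters, depth-zero representations, their positive-depth twists, and the $\pi_{\D^\times}(\Psi)$---and computing restrictions via Lemma~\ref{L:char}, Remark~\ref{two}, and Proposition~\ref{P:OD}, I see that $\sigma$ must be either one of the characters enumerated in Lemma~\ref{L:char} or a summand $\pi_{\ODt}(\Psi)$ for some $\D^\times$-datum $\Psi$.

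For ($\Leftarrow$), suppose $\Psi\eqo\Psi'$ is witnessed by $g\in\ODt$. I would first replace $\Psi'$ by $\lconj{g^{-1}}{\Psi'}$, using Lemma~\ref{L:equiv1}(d) together with the identity $\Ind_H^{\ODt}\rho\cong\Ind_{gHg^{-1}}^{\ODt}\lconj{g}{\rho}$ for $g\in\ODt$, reducing to $T=T'$ and $(\chi\phi)|_{T_0}=(\chi'\phi')|_{T_0}$. The correction $\delta:=(\chi'\phi')(\chi\phi)^{-1}$ is then a character of $T$ trivial on $T_0$. Since both $T/T_0$ and $\D^\times/\ODt$ are infinite cyclic (via $\valD$) with the inclusion corresponding to multiplication by $1$ or $2$ on $\mathbb{Z}$, the restriction map on Pontryagin duals is surjective, so $\delta$ extends to a character $\tilde\delta$ of $\D^\times$ trivial on $\ODt$. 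Form the datum $\Psi''=(T,\tilde\delta|_T\phi,r,\chi)$, which is valid because $\tilde\delta|_T$ has depth $0$ and hence $\tilde\delta|_T\phi$ inherits genericity and depth $r$ from $\phi$. Its total character agrees with $\chi'\phi'$ on all of $T$, giving $\rho(\Psi'')\cong\rho(\Psi')$ by Proposition~\ref{P:HM}(1); since $\tilde\delta|_{T_0}=1$, Lemma~\ref{L:equiv1}(c) applied to $\Psi$ and $\Psi''$ gives $\rho(\Psi)|_{T_0\D^\times_s}=\rho(\Psi'')|_{T_0\D^\times_s}$, and inducing to $\ODt$ produces the desired isomorphism.

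For ($\Rightarrow$), assume $\pi_{\ODt}(\Psi)\cong\pi_{\ODt}(\Psi')$. Depths match, and the ramification type of $T$ is a representation-theoretic invariant: combining Propositions~\ref{P:OD} and \ref{P:restD1} shows that $\Res_{\D^1}\pi_{\ODt}(\Psi)$ is irreducible precisely when $T$ is unramified, and in the non-square ramified case the $\D^1$-conjugacy classes of the tori appearing in its decomposition (Lemma~\ref{L:tori}) pin down the $\D^\times$-conjugacy class of $T$. All $\D^\times$-conjugate maximal tori of a given type being $\ODt$-conjugate, I reduce to $T=T'$. Clifford theory for the normal inclusion $\ODt\lhd\D^\times$ with abelian quotient then tells me that any two irreducibles of $\D^\times$ sharing an irreducible $\ODt$-constituent differ by twisting by a character $\tilde\eta$ of $\D^\times$ trivial on $\ODt$, so $\pi_{\D^\times}(\Psi')\cong\tilde\eta\pi_{\D^\times}(\Psi)\cong\pi_{\D^\times}(T,\tilde\eta|_T\phi,r,\chi)$. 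Proposition~\ref{P:HM}(2) produces $g\in N_{\D^\times}(T)$ with $\lconj{g}{(\tilde\eta|_T\chi\phi)}=\chi'\phi'$ on $T$, and restriction to $T_0$ (where $\tilde\eta$ is trivial) yields $\lconj{g}{(\chi\phi)}|_{T_0}=(\chi'\phi')|_{T_0}$. A case analysis on $g$ shows it may be taken in $\ODt$: if $g\in T$ the conjugation is trivial; if $g\in T\sigma$ with $T$ ramified, then $\sigma=i\in\ODt$ and we adjust $g$ to equal $i$; the remaining case $g\in Tj$ with $T$ unramified would give $\Psi'\eqo\overline\Psi$ via the forward direction, forcing $\pi_{\ODt}(\Psi)\cong\pi_{\ODt}(\overline\Psi)$ and contradicting the distinctness of these summands in Proposition~\ref{P:OD}.

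The main obstacle is the forward direction, since $\ODt$-equivalence only controls the product character $\chi\phi$ on $T_0$ whereas Hakim--Murnaghan refactorization (Proposition~\ref{P:HM}(1)) requires agreement on all of $T$. The extension step $\delta\mapsto\tilde\delta$ bridges this gap, exploiting the cooperative structure of the inclusion $T/T_0\hookrightarrow\D^\times/\ODt$. A secondary subtlety is verifying that the intertwining element in the reverse direction can be chosen in $\ODt$: this is immediate when $T$ is ramified since $i\in\ODt$, while the unramified case requires invoking the already-established forward direction to rule out the competing possibility via Proposition~\ref{P:OD}.
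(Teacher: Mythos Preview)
Your enumeration and the forward direction ($\Leftarrow$) follow the paper closely; the detour through extending $\delta$ to a character $\tilde\delta$ of all of $\D^\times$ is unnecessary (the paper works directly with $\varphi=\delta$ as a character of $T$ trivial on $T_0$, which is all that is used), but it is harmless.

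The reverse direction ($\Rightarrow$) is where your argument genuinely diverges from the paper's. The paper restricts further to $\D^1$, applies Proposition~\ref{P:HM} for $\D^1$ to force agreement of $\chi\phi$ and $\chi'\phi'$ on $Z_0S$, and then---in the unramified case, where $Z_0S$ has index two in $T_0$---rules out the bad twist $\xi$ by an explicit Mackey intertwining computation showing $\Hom_{T_0G_s\cap\lconj{\gamma}{(T_0G_s)}}(\xi\rho(\Psi),\lconj{\gamma}{\rho(\Psi)})=0$ for every $\gamma\in\ODt$. You instead go \emph{upward}: after matching the $\D^\times$-conjugacy class of the torus via $\D^1$-invariants, you invoke a Clifford-type principle to deduce that $\pi_{\D^\times}(\Psi)$ and $\pi_{\D^\times}(\Psi')$ differ by an unramified twist, apply Proposition~\ref{P:HM} at the level of $\D^\times$, and dispose of the residual $N_{\D^\times}(T)/T$ ambiguity in the unramified case by appealing to the inequivalence clause of Proposition~\ref{P:OD} (together with the already-proven forward direction). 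This is slicker in that it avoids the hands-on intertwining calculation, recycling Proposition~\ref{P:OD} instead; the paper's route is more self-contained, needing no Clifford input. Two points to tighten: since $\D^\times/\ODt\cong\mathbb{Z}$ is infinite, your Clifford step really passes through the index-two subgroup $\ratF^\times\ODt$ and deserves one sentence of justification; and your torus-matching paragraph only explicitly treats the ramified case with $-1\notin(\ratF^\times)^2$, though the same invariant (the $\D^1$-conjugacy class of $S$, which by Lemma~\ref{L:tori} determines the $\D^\times$-class of $T$) covers the remaining case.
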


\begin{proof}
 The first point is Lemma~\ref{L:char}.  That the list in the second point is exhaustive follows from the classification of representations of $\D^\times$ and Proposition~\ref{P:OD}.  %The equivalence among characters was established in Lemma~\ref{L:char}, so 
We have only to prove the last statement. % the equivalence among the representations $\pi_\Psi$, which all have degree at least two, was established in Lemmas~\ref{L:1} and \ref{L:2}.

First suppose $\Psi \eqo \Psi'$. %; let us show that $\pi_{\ODt}(\Psi) \cong \pi_{\ODt}(\Psi')$.
Since $\rho(\lconj{g}{\Psi})\cong \lconj{g}{\rho(\Psi)}$ for each $g\in\D^\times$, it follows easily that for any $g\in \ODt$, $\pi_{\ODt}(\lconj{g}{\Psi}) \cong \pi_{\ODt}(\Psi)$.  Therefore without loss of generality we may replace $\Psi'$ with an $\ODt$-conjugate of the form $(T,\phi',r,\chi')$ such that %and thereby assume $T=T'$, $r=r'$ and 
$\Res_{T_0}\chi\phi = \Res_{T_0}\chi'\phi'$.

Set $\varphi = \chi'\phi'(\chi\phi)^{-1}$; since this is a character of $T$ trivial on $T_0$, we deduce that $\Psi'':= (T,\varphi \phi,r,\chi)$ is also a $\D^\times$-datum.  Since $\chi(\varphi \phi) = \chi'\phi'$,  Proposition~\ref{P:HM} implies $\rho(\Psi'') \cong \rho(\Psi')$, whence their restrictions to $T_0G_s$ are equivalent.  On the other hand, since $\Res_{T_0}\varphi \phi = \Res_{T_0}\phi$ and $\Psi''$ and $\Psi$ share the same $\D^\times$-character $\chi$, %the $\D^\times$-characters of $\Psi''$ and $\Psi$ coincide, 
it follows from Lemma~\ref{L:equiv1} that $\Res_{T_0G_s}\rho(\Psi)=\Res_{T_0G_s}\rho(\Psi'')$.  Consequently $\pi_{\ODt}(\Psi) \cong \pi_{\ODt}(\Psi')$, as required.

Now suppose $\pi_{\ODt}(\Psi) \cong \pi_{\ODt}(\Psi')$.  %We need to show that $\Psi \eqo \Psi'$.
%Set $S=T\cap \D^1$ and  $S'=T'\cap \D^1$. Suppose $\pi_{\ODt}(\Psi) \cong \pi_{\ODt}(\Psi')$.   
Let $\Psi^1$ and ${\Psi'}^1$ denote the restrictions of $\Psi$ 
and $\Psi'$ to $\D^1$, respectively.

By the proof of Proposition~\ref{P:OD}, if $\Res_{\D^1}\pi_{\ODt}(\Psi)$ is irreducible then $\pi_{\D^1}(\Psi^1) \cong \pi_{\D^1}({\Psi'}^1)$, whereas if it is reducible then
$$
\pi_{\D^1}(\Psi^1) \in \{\pi_{\D^1}({\Psi'}^1),  \pi_{\D^1}(\lconj{\mu}{\Psi'}^1)\}.
$$
Since $\mu\in \ODt$, we may replace $\Psi'$ by an $\ODt$ conjugate if necessary to assume $\pi_{\D^1}(\Psi^1) \cong \pi_{\D^1}({\Psi'}^1)$ in this case as well.

Thus we may replace $\Psi'$ by a $\D^1\subseteq \ODt$ conjugate to assume that $S=T\cap \D^1=T'\cap \D^1$, $r=r'$ and $\phi^1 = {\phi'}^1$.  In terms of $\D^\times$-data, it follows that $T=T'$ and $\Res_{S}\phi = \Res_S \phi'$.  On the other hand, comparing central characters and using Lemma~\ref{L:equiv1} yields $\Res_{Z_0}\chi\phi = \Res_{Z_0}(\chi'\phi')$.  Thus $\chi\phi$ and $\chi'\phi'$ agree on $Z_0S$.   When $T=\extF^\times$ is ramified, then $Z_0S=\OF^\times\extF^1 = \extF^\times\cap\ODt=T_0$ so we may conclude $\Psi \eqo \Psi'$.

When $T$ is unramified, then $Z_0S$ is of index two in $T_0$. Choose a character $\xi$ of $T$ which restricts on $T_0$ to the nontrivial character of $T_0/Z_0S$; then $\Res_{T_0}\chi'\phi' \in \{\Res_{T_0}\chi\phi, \Res_{T_0}\xi\chi\phi\}$.  

Suppose for the purpose of contradiction that $\Res_{T_0}\chi'\phi'=\Res_{T_0}\xi\chi\phi$. %In the former case, we have $\Psi \eqo \Psi'$, and we are done.
Then $\Psi'\eqo \Psi''$ where 
 $\Psi'' = (T,\xi\phi,r,\chi)$.  Since $\xi$ is of depth zero Lemma~\ref{L:equiv1} implies that the pullbacks $\omega, \omega_\xi$ of the Heisenberg-Weil lifts corresponding to $\phi$ and $\xi\phi$, respectively, coincide. Since $\xi$ is trivial on $G_s$, $\rhop'' = \xi \rhop$ %(where we have used that $\xi$ is trivial on $G_s$ to ensure this expression is independent of any choice of factorization of elements of $TG_s$), 
whence $\rho(\Psi'')=\xi\rho(\Psi)$.  %By our hypothesis we have $\pi_{\ODt}(\Psi) \cong \pi_{\ODt}(\Psi'')$; so 
To derive a contradiction it suffices by Mackey theory to show that for all $\gamma \in \ODt$, 
\begin{equation} \label{E:cant}
H_\gamma := \Hom_{T_0G_s \cap \lconj{\gamma}{T_0}G_s}(\xi\rho(\Psi),\lconj{\gamma}{\rho(\Psi)}) = \{0\}.
\end{equation}
This is true for $\gamma \in T_0G_s$.  For $\gamma \notin T_0G_s$, we may without loss of generality assume that $T=\unextF^\times$ and by scaling by an element of $T_0 = \OL^\times$, that $\gamma = 1+zj$, with $z\in \OL$, $\valD(zj)=m < s$.  For any $\delta \in \OL^\times$, we have
$$
\lconj{\gamma}{\delta} = \nrd(\gamma)^{-1}\left( (\delta-\varpi z\overline{z}\overline{\delta})+ (\delta-\overline{\delta})zj \right),
$$
which lies in $T_0G_s$ if and only if $\delta \in Z_0T_{s-m} \subseteq Z_0T_{0+}$.
Thus 
$T_0G_s \cap \lconj{\gamma}{T_0}G_s \subseteq Z_0T_{0+}G_s$, on which $\rho$ and $\xi\rho$ agree.  
Thus for $\gamma \notin T_0G_s$, we have
$$
H_\gamma = \Hom_{T_0G_s \cap \lconj{\gamma}{T_0}G_s}(\rho(\Psi),\lconj{\gamma}{\rho(\Psi)})=\{0\},
$$
since the irreducibility of $\pi_{\ODt}(\Psi)$ implies that only $\gamma \in T_0G_s$ can support an intertwining operator.   Consequently \eqref{E:cant} holds for all $\gamma\in\ODt$, our contradiction.
\end{proof}

We deduce that the representations of degree greater than one are parametrized by $\ODt$-conjugacy classes of $\ODt$-data $\Psi_0 = (T_0,\phi_0,r,\chi_0)$ where these each represent the restriction of a $\D^\times$-datum to $\ODt$.%, since $\pi_{\ODt}(\Psi)$ depends only on $\Psi_0$.

\section{Remarks on the matching of types}\label{S:matching}

The Jacquet-Langlands correspondence asserts a bijection between the irreducible representations of $\D^\times$ and the irreducible square-integrable representations of $\GL(2,\ratF)$, characterized by a matching of $L$-functions and $\varepsilon$-factors, or simply by a matching of characters on the regular elliptic sets of the two groups (which are in natural correspondence) \cite[\S 56]{BushnellHenniart2006}.   The representations of $\GL(2,\ratF)$ which occur are (up to twisting by characters of $\GL(2,\ratF)$), precisely: (a) the supercuspidal representations, which are determined by characters of tori, and (b) the Steinberg representation.  

For $p\neq 2$, the correspondence is simply stated; see \cite[\S 3]{Moy1986} or \cite[\S 56]{BushnellHenniart2006}.  The Steinberg representation of $\GL(2,\ratF)$ corresponds to the trivial representation of $\D^\times$.  Each torus of $\GL(2,\ratF)$ or of $\D^\times$ corresponds to a quadratic field extension of $\ratF$, up to conjugacy, so they are in natural correspondence and we abusively use the same letter $T$ to denote corresponding tori.   To an unramified torus $T$ and a generic character $\theta$ of depth zero, one associates the depth-zero supercuspidal representation of $\GL(2,\ratF)$ obtained by inflating the Deligne-Lusztig cuspidal representation $R_T(\theta)$ to $\GL(2,\OF)$ and compactly inducing this to $\GL(2,\ratF)$.  Through the Jacquet-Langlands correspondence it is identified with $\pi(\theta)$.  On the other hand, to any torus $T$ and character $\phi$ of positive depth $r$, there is a small correction factor: one associates $\pi_{\GL(2,\ratF)}(T,y,\phi,r,\chi)$ with $\pi_{\D^\times}(T, \phi\eta, r, \chi)$, where in each case $\chi \in \widehat{\ratF^\times}$, and $\eta$ is the quadratic unramified character of $\D^\times$ defined by $\eta(x) = (-1)^{2\val_D(x)}$. 

It is known that the Jacquet-Langlands correspondence does not descend to a correspondence of representations of the derived groups.  For example, $SL(2,\ratF)$ admits an $L$-packet with four elements whereas we saw in Section~\ref{S:dd1} that all representations of $\D^1$ occur in packets of size one or two.  It is also unreasonable to expect the correspondence to descend to one of the associated  maximal compact open subgroups, since, for example, the supercuspidal representations of $\GL(2,\ratF)$ decompose into infinitely many components upon restriction to $\GL(2,\OF)$, some of which are common to all representations of the same central character (as may be deduced from \cite{Nevins2013}).

On the other hand, it is expected \cite[\S 56]{BushnellHenniart2006} that the correspondence preserves types.  This is trivial for the trivial-Steinberg pair, and follows directly in the depth-zero case from properties of Deligne-Lusztig cuspidal representations.  In the positive-depth cases, each of $\GL(2,\ratF)$ and $\D^\times$ contain a unique maximal compact open subgroup $K$ up to conjugacy, and each type can be realized as the inducing datum for an irreducible representation of least depth occuring in the restriction of a representation to $K$.  In the case of $\GL(2,\ratF)$, the restriction to $\GL(2,\OF)$ has a unique component of minimal depth; in the case of $\D^\times$, there are two if the torus is unramified.  In this latter case, we have shown the inducing data are $\D^\times$-conjugate.  Thus the correspondence is well-defined.

\section*{Acknowledgments}  The author conducted this research during a wonderful visiting year at the \emph{Institut de Math\'ematiques et de Mod\'elisation de Montpellier}, Universit\'e Montpellier II, at the invitation of Ioan Badulescu.  My thanks are also due to Jeff Adler, who suggested several improvements to a first draft of this article, in particular allowing some unnecessary hypotheses to be removed.  

\bibliography{padicrefs}{}

\providecommand{\bysame}{\leavevmode\hbox to3em{\hrulefill}\thinspace}
\providecommand{\MR}{\relax\ifhmode\unskip\space\fi MR }
% \MRhref is called by the amsart/book/proc definition of \MR.
\providecommand{\MRhref}[2]{%
  \href{http://www.ams.org/mathscinet-getitem?mr=#1}{#2}
}
\providecommand{\href}[2]{#2}
\begin{thebibliography}{10}

\bibitem{Adler1998}
Jeffrey~D. Adler, \emph{Refined anisotropic {$K$}-types and supercuspidal
  representations}, Pacific J. Math. \textbf{185} (1998), no.~1, 1--32.

\bibitem{Adlercomm}
\bysame,  (2014), private communication.

\bibitem{AdlerPrasad2006}
Jeffrey~D. Adler and Dipendra Prasad, \emph{On certain multiplicity one
  theorems}, Israel J. Math \textbf{153} (2006), 221--245.

\bibitem{AdlerRoche2000}
Jeffrey~D. Adler and Alan Roche, \emph{An intertwining result for {$p$}-adic
  groups}, Canad. J. Math. \textbf{52} (2000), no.~3, 449--467.

\bibitem{BushnellHenniart2006}
Colin~J. Bushnell and Guy Henniart, \emph{The local {L}anglands conjecture for
  {$\rm GL(2)$}}, Grundlehren der Mathematischen Wissenschaften, vol. 335,
  Springer-Verlag, Berlin, 2006.

\bibitem{Choiy2014}
Kwangho Choiy, \emph{On multiplicity in the restriction for $\mathrm{SL}(m,d)$
  over a $p$-adic field}, arXiv:1306.6118v2 [math.NT], 2014.

\bibitem{Corwin1974}
Lawrence Corwin, \emph{Representations of division algebras over local fields},
  Advances in Math. \textbf{13} (1974), 259--267.

\bibitem{CorwinHowe1977}
Lawrence Corwin and Roger~E. Howe, \emph{Computing characters of tamely
  ramified {$p$}-adic division algebras}, Pacific J. Math. \textbf{73} (1977),
  no.~2, 461--477.

\bibitem{HakimMurnaghan2008}
Jeffrey Hakim and Fiona Murnaghan, \emph{Distinguished tame supercuspidal
  representations}, Int. Math. Res. Pap. IMRP (2008), no.~2, Art. ID rpn005,
  166.

\bibitem{Howe1971}
Roger Howe, \emph{Representation theory for division algebras over local fields
  (tamely ramified case)}, Bull. Amer. Math. Soc. \textbf{77} (1971),
  1063--1066.

\bibitem{JacquetLanglands1970}
H.~Jacquet and R.~P. Langlands, \emph{Automorphic forms on {${\rm GL}(2)$}},
  Lecture Notes in Mathematics, Vol. 114, Springer-Verlag, Berlin, 1970.

\bibitem{Kim2007}
Ju-Lee Kim, \emph{Supercuspidal representations: an exhaustion theorem}, J.
  Amer. Math. Soc. \textbf{20} (2007), no.~2, 273--320 (electronic).

\bibitem{Misaghian2005}
Manouchehr Misaghian, \emph{The representations of {$D^1$}}, Rocky Mountain J.
  Math. \textbf{35} (2005), no.~3, 953--976.

\bibitem{Moy1986}
Allen Moy, \emph{Local constants and the tame {L}anglands correspondence},
  Amer. J. Math. \textbf{108} (1986), no.~4, 863--930. \MR{853218 (88b:11081)}

\bibitem{MoyPrasad1994}
Allen Moy and Gopal Prasad, \emph{Unrefined minimal {$K$}-types for {$p$}-adic
  groups}, Invent. Math. \textbf{116} (1994), no.~1-3, 393--408.

\bibitem{Nevins2013}
Monica Nevins, \emph{Branching rules for supercuspidal representations of
  {$SL_2(k)$}, for {$k$} a {$p$}-adic field}, J. Algebra \textbf{377} (2013),
  204--231.

\bibitem{Riehm1970}
Carl Riehm, \emph{The norm {$1$} group of a {${\mathfrak{P}}$}-adic division
  algebra}, Amer. J. Math. \textbf{92} (1970), 499--523.

\bibitem{Silberger1979}
Allan~J. Silberger, \emph{Isogeny restrictions of irreducible admissible
  representations are finite direct sums of irreducible admissible
  representations}, Proc. Amer. Math. Soc. \textbf{73} (1979), no.~2, 263--264.

\bibitem{Yu2001}
Jiu-Kang Yu, \emph{Construction of tame supercuspidal representations}, J.
  Amer. Math. Soc. \textbf{14} (2001), no.~3, 579--622 (electronic).

\end{thebibliography}
\bibliographystyle{amsplain}

\end{document}